\documentclass{ifacconf}

\usepackage{graphicx} 
\usepackage[numbers]{natbib}
\usepackage{mathtools}
\usepackage{amssymb,amsmath,amsfonts}
\usepackage{color}
\usepackage{microtype}
\usepackage{enumitem}
\usepackage{placeins}
\usepackage{comment}

\renewcommand{\theenumi}{\arabic{enumi}}

\DeclareMathOperator*{\argmin}{argmin}

\begin{document}

\begin{frontmatter}

\title{An Optimal Control Approach to Finite-Time Stabilization of Linear Systems}
%\thanksref{footnoteinfo}} 
% Title, preferably not more than 10 words.

\author{W. Weng }, 
\author{Y. Chitour }, 
\author{P. Mason }

\address{L2S, CNRS, Gif-sur-Yvette, France, (e-mail: weihao.weng@centralesupelec.fr, yacine.chitour@centralesupelec.fr, paolo.mason@centralesupelec.fr)}

\begin{abstract} 
    This paper presents a finite-time stabilization method for the integrator chain via optimal control techniques by introducing a tailored cost function. We analyze key properties of the resulting value function to derive the corresponding Hamilton-Jacobi-Bellman equation. Numerical simulations 
    illustrate the effectiveness of the approach.
\end{abstract}

\begin{keyword}
Finite-time stabilization, optimal control, linear system, Hamilton-Jacobi-Bellman equation %, gain regulation  
\end{keyword}

\end{frontmatter}
%===============================================================================

\newenvironment{sketch}{%
  \par\noindent\textbf{Sketch of proof.} \ignorespaces
}{\hfill$\square$\par}

\newenvironment{proof}{%
  \par\noindent\textbf{Proof.} \ignorespaces
}{\hfill$\square$\par}

\section{Introduction}
Finite-time stabilization has attracted considerable attention in nonlinear control due to its ability to achieve convergence in bounded time, which is desirable for systems that require fast and robust responses. Among classical benchmarks, the integrator chain serves as a fundamental model for studying nonlinear feedback design and optimal control strategies. Moreover, finite-time stabilization of a chain of integrators arises in several control applications. Perturbed double integrators, for instance, serve as models for motorized actuators or robotic arms (see e.g. \cite{MR1618028,hong2002finiterobot,orlov2009discontinuous}), while Higher Order Sliding Mode Control (cf. \cite{emel1996high, shtessel2014sliding, cruz2017homogeneous}) can be formulated as the stabilization of an auxiliary system arising as a perturbed chain of integrators built from the output and its higher time derivatives as shown by \cite{MR2889285}. 
 The by now classical approaches to handle finite-time stabilization of the chain of integrators include homogeneous feedback design, Lyapunov-based synthesis, and sliding mode techniques. This issue
  was addressed in relation with homogeneous systems in \cite{zubov1961methods, bhat1997finite}, and homogeneity was used for stabilization of linear systems in \cite{praly1997generalized}. In \cite{MR2150956}, the link between finite-time stabilization and homogeneity of a system was established, and it was shown that a homogeneous system is finite-time stable if and only if it is asymptotically stable and has a negative homogeneity degree. This result has, since then, been used for the development of many controllers for a chain of integrators. A homogeneous nonsmooth proportional-derivative controller for robot manipulators (double integrator system) was developed in \cite{hong2002finiterobot}, and this approach was generalized for an arbitrary length of a chain of integrators in \cite{hong2002finite}. 
The homogeneity approach was also used in \cite{MR1999375,levant2005homogeneity} to obtain finite-time stabilization of the arbitrary order sliding mode controllers for Single Input Single Output (SISO) systems \cite{levant2001higher}. A recent approach introduced by \cite{polyakov2015finite} exploits the concept of implicit Lyapunov function in order to ensure finite-time stabilization of the perturbed integrator chain.
Another approach relies on optimal control and has been initiated by Nakamura et al., 
\cite{nakamura2009homogeneous,nakamura2010global,NAKAMURA2013552}), see also \cite{mboup2017remark} and \cite{haddad2015finite}.
In this paper, we aim at addressing the issue of finite-time stabilization of a chain of integrators relying in a systematic way on the use of optimal control. 
Inspired by previous works, where homogeneity properties of the Lyapunov functions played a crucial role for finite-time stabilization,  we consider a large class of homogeneous cost functions to be minimized over an infinite horizon. We carefully study the associated value function, establishing in particular a homogeneity property of the latter. As a consequence, we prove firstly that convergence to zero occurs in finite time and, secondly, that the value function is regular enough so that it satisfies a corresponding Hamilton-Jacobi-Bellman (HJB) equation. As a consequence, the optimal control can be expressed in feedback form using the value function. Furthermore, we show that some controllers achieving finite-time stabilization from the literature (cf. \cite{hong2002finite} and  \cite{harmouche2017stabilisation}) are particular instances of our framework. Numerical simulations %and an application to a gain regulation problem 
illustrate the approach.

The paper is organized as follows. In Section~\ref{sec-ocp} we formulate the optimal control problem under study and discuss some related basic results. In Section~\ref{sec-main} we show that the optimal trajectories converge to the origin in finite time, we derive the HJB equation and discuss regularity properties of the value function. In Section~\ref{sec-simu} we provide numerical simulations, while Section~\ref{sec-conclusion} concludes the paper.

\subsection{Notations}
%\begin{itemize}
%    \item $J_n$ is the Jordan block of size $n$ with $0$ as eigenvalue, $(e_i)_{1\leq i\leq n}$ designate %the canonical basis of $\mathbb{R}^n$.
%    \item For $1 \leq j\leq n $, $\bar{x}_j=(x_1,\dots,x_j)$.
%    \item For $I \subset \mathbb{R}, k > 0$, $\|\cdot\|_{L^{k}(I,\mathbb{R}^n)}: u \mapsto %\left(\int_I\|u(t)\|^{k}dt\right)^\frac{1}{k}.$
%    \item For $r > 0$, $\lfloor\cdot\rceil^r : x \mapsto |x|^rsgn(x)$, where $sgn$ is the sign function.
%\end{itemize}
 We denote by $J_n$ the Jordan block of size $n$ corresponding to the zero eigenvalue, and by $(e_i)_{1\leq i\leq n}$ the canonical basis of $\mathbb{R}^n$.
 
 For $r > 0$,  we define $\lfloor\cdot\rceil^r : x \mapsto |x|^rsgn(x)$, where $sgn$ is the sign function.
 
We introduce below some definitions that will be used extensively in the upcoming sections.
\begin{defn}
    \label{def1}
    Let $m \in \mathbb{N}_{\ge 1},r=(r_1,...,r_m) \in \mathbb{R}_{>0}^m$,  and $\epsilon > 0$. We call $r$-weighted dilation the mapping from $\mathbb{R}^m$ to itself defined as
    \[\delta_\epsilon^r(x_1,x_2,...,x_m) = (\epsilon^{r_1}x_1, \epsilon^{r_2}x_2,...,\epsilon^{r_m}x_m).\]
\end{defn}

\begin{defn}
    \label{def2}
    Let $m \in \mathbb{N}_{\ge 1},r\in \mathbb{R}_{>0}^m$,  and  $\mu \in \mathbb{R}$. A mapping $f : \mathbb{R}^m \to \mathbb{R}$ is said to be homogeneous of degree $\mu$ with respect to the family of dilations $(\delta^r_\epsilon)_{\epsilon > 0}$ if 
     \[\forall \epsilon > 0,\ \forall x \in \mathbb{R}^m,\quad f(\delta_\epsilon^r(x))= \epsilon^{\mu} f(x).\]
\end{defn}

\begin{defn}
    \label{def3}
    Let $m \in \mathbb{N}_{\ge 1},r\in \mathbb{R}_{>0}^m$,  and  $\mu \in \mathbb{R}$.  A vector field $\mathcal{G}:\mathbb{R}^m \rightarrow \mathbb{R}^m$ is said to be weighted homogeneous of degree $\mu$ with respect to the family of dilations $(\delta^r_\epsilon)_{\epsilon > 0}$ if 
    \[\forall \epsilon > 0,\ \forall x \in \mathbb{R}^m,\quad \mathcal{G}(\delta_\epsilon^r(x))= \epsilon^\mu\delta^r_\epsilon(\mathcal{G}(x)).\]
\end{defn}

\section{Problem Formulation and basic results}
\label{sec-ocp}
In this section, we formulate the problem of interest and present some preliminary results.

\subsection{Statement of the problem}

 Let $n, m \in \mathbb{N}_{\ge 1}$ and $q > 1$, consider a pair of matrices $(A,B) \in \mathbb{R}^{n\times n}\times \mathbb{R}^{n\times m}$ as well as $x_0 \in \mathbb{R}^n$ and the autonomous system with $u \in L^{q}(\mathbb{R}_{\ge 0},\mathbb{R}^m)$,
\begin{equation}
\label{eq:L}
\qquad  \forall t \geq 0,\quad 
\begin{cases}
    \dot{x}(t)=Ax(t)+Bu(t),\\
    x(0)=x_0.
\end{cases}
\end{equation}

Introduce the instantaneous cost
\begin{equation}\label{eq:inst-cost}
\mathcal{L} : (x,u) \in \mathbb{R}^n\times\mathbb{R}^m \mapsto \frac{\|u\|^{q}}{q} +F(x),
\end{equation}
where $F$ is a mapping from $\mathbb{R}^n$ to $\mathbb{R}_{\ge 0}$ satisfying the following assumptions:
\begin{enumerate}[label= (A\arabic*), leftmargin=1cm]
    \item $F \in C^1(\mathbb{R}^n,\mathbb{R})$, i.e., is continuously differentiable.
    \item $F$ is convex and positive definite, i.e., $F(0) = 0$ and $F(x)>0$ for every $x\neq 0$.
    \item There exist $r\in \mathbb{R}_{>0}^n$, $d > \max_{1 \leq j\leq n} r_j$ such that
    \[\forall \epsilon > 0, \forall x \in \mathbb{R}^n\text{, }\quad F(\delta_\epsilon^r(x)) = \epsilon^dF(x).\]
\end{enumerate}
%Define 
%\begin{equation}\label{eq:inst-cost}
%\mathcal{L} : (x,u) \in \mathbb{R}^n\times\mathbb{R}^m \mapsto \frac{\|u\|^{q}}{q} +F(x).
%\end{equation}
For $T\in (0,\infty]$, the optimal control problem \((OCP)_T\) consists in minimizing the integral cost %$\mathcal{J}_T:\mathbb{R}^n\times L^{q}([0,T),\mathbb{R}^m)\to [0,\infty]$ defined as
\[
\mathcal{J}_T:\mathbb{R}^n\times L^{q}([0,T),\mathbb{R}^m)\longrightarrow [0,\infty] %\nonumber\\
\]
defined as 
\begin{equation}
\mathcal{J}_T(x_0,u) := \int_0^T \mathcal{L}(x(t),u(t))dt,\label{eq:JT}
\end{equation}
for trajectories of~\eqref{eq:L} starting at any $x_0\in\mathbb{R}^n$. We set 
\[V_T(x_0) = \operatorname*{inf}_{u \in L^{q}([0,T),\mathbb{R}^m)} \mathcal{J}_T(x_0,u).
\]
The function $V_T$ is the \emph{value function} associated with \((OCP)_T\). Note that  $\mathcal{J}_{T_1}(x_0,u) < \mathcal{J}_{T_2}(x_0,u)$ for every $T_1<T_2$ in $(0,\infty]$ and, as a consequence  $V_{T_1}\leq V_{T_2}$.

%For \(0<T<\infty\), the finite horizon problem %\((OCP)_T\) is defined analogously by
%\[
%\mathcal{J}_T(x_0,u) \;=\; \int_0^T %\mathcal{L}(x(t),u(t))dt,
%\quad u \in L^{q}([0,T],\mathbb{R}^m).
%\]

%\subsection{Existence and uniqueness of a minimizer}

\subsection{Preliminary results concerning $(OCP)_T$}

%{\color{blue} completare il teorema con la convessità del costo}
%Convexity of the cost and value function, as well as 
From the convexity of the instantaneous cost $\mathcal{L}$ it easily follows that the integral cost $\mathcal{J}_T$ is also convex, as stated below.
\begin{prop}
\label{p:conv}
For $T\in \mathbb{R}_{>0}$ the function  $\mathcal{J}_T$ is convex  on $\mathbb{R}^n\times L^{q}([0,T),\mathbb{R}^m)$. If $T = \infty$ convexity holds on the domain of $\mathcal{J}_T$, i.e., the subset of $\mathbb{R}^n\times L^{q}([0,T),\mathbb{R}^m)$ such that $\mathcal{J}_T$ is bounded.
\end{prop}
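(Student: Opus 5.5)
The argument rests on two facts: the state trajectory depends affinely on $(x_0,u)$, and the instantaneous cost $\mathcal{L}$ is jointly convex.

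\textbf{Affine dependence of the trajectory.} For $(x_0,u)$ and $(y_0,v)$ in $\mathbb{R}^n\times L^{q}([0,T),\mathbb{R}^m)$ and $\lambda\in[0,1]$, let $x(\cdot)$ and $y(\cdot)$ denote the solutions of~\eqref{eq:L}. By the variation of constants formula,
\[
x(t)=e^{tA}x_0+\int_0^t e^{(t-s)A}Bu(s)\,ds .
\]
The map $(x_0,u)\mapsto x(\cdot)$ is therefore linear. Hence the trajectory associated with $(\lambda x_0+(1-\lambda)y_0,\ \lambda u+(1-\lambda)v)$ is exactly $\lambda x(\cdot)+(1-\lambda)y(\cdot)$. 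The integral is well defined because $u\in L^q$ on bounded intervals implies $u\in L^1_{loc}$.

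\textbf{Convexity of $\mathcal{L}$.} The map $u\mapsto \|u\|^q/q$ is convex: it is the composition of the convex norm with the convex nondecreasing function $s\mapsto s^q/q$ on $\mathbb{R}_{\ge0}$, using $q>1$. The map $F$ is convex by (A2). Hence $\mathcal{L}(x,u)$, a sum of convex functions of separate variables, is jointly convex on $\mathbb{R}^n\times\mathbb{R}^m$. Applying this pointwise in $t$ gives
\[
\mathcal{L}\bigl(\lambda x(t)+(1-\lambda)y(t),\lambda u(t)+(1-\lambda)v(t)\bigr)\le \lambda\mathcal{L}(x(t),u(t))+(1-\lambda)\mathcal{L}(y(t),v(t))
\]
for almost every $t$.

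\textbf{Integration and the case $T<\infty$.} Integrating this inequality over $[0,T)$ yields the convexity inequality for $\mathcal{J}_T$. All integrands are nonnegative and measurable, so integration is legitimate in $[0,\infty]$. For $T<\infty$ the cost is moreover finite everywhere: $u\in L^q$, and $x$ is continuous hence bounded on $[0,T]$, so $F(x(\cdot))$ is bounded. Thus $\mathcal{J}_T$ is a genuine real-valued convex function on the whole space.

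\textbf{The case $T=\infty$.} Here the same pointwise inequality integrated over $\mathbb{R}_{\ge0}$ shows two things. First, if both $\mathcal{J}_\infty(x_0,u)$ and $\mathcal{J}_\infty(y_0,v)$ are finite, then so is the cost of the convex combination. So the domain is a convex set, and on it the convexity inequality holds with finite values. The only point requiring care is precisely this finiteness, that is, the convexity of the domain, which follows directly from the inequality and the nonnegativity of $\mathcal{L}$. I do not expect any genuine obstacle.
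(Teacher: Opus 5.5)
Your proposal is correct and follows the same route as the paper. The paper justifies the statement only by the convexity of $\mathcal{L}$, and you make explicit the implicit ingredient that the trajectory depends linearly on $(x_0,u)$. Your handling of finiteness is also sound: the cost is finite everywhere for $T<\infty$, and the domain is convex for $T=\infty$.
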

Uniqueness of the minimizer for $(OCP)_T$ for every $x_0\in \mathbb{R}^n$ can be easily obtained from Proposition~\ref{p:conv}. On the other hand, existence of the minimizer follows from classical arguments (see e.g.~\cite[Theorem 11]{lee1967foundations}).
\begin{thm}
\label{thm1}
Let $T\in \mathbb{R}_{>0}$  or $T=\infty$ and $(A,B)$ controllable. Then for every $x_0\in\mathbb{R}^n$,  $(OCP)_T$ admits a unique minimizer in $L^{q}([0,T),\mathbb{R}^m)$. 
%For $0<T <\infty$, there exists an unique minimizer for $(OCP)_T$ i.e., $V_T(x_0)$
%is achieved at a unique $u\in L^{q}([0,T),\mathbb{R}^m)$. The same result is true for $T=\infty$ if, moreover, $(A,B)$ is controllable. 
\end{thm}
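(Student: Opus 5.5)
I will use the direct method in the calculus of variations, working in the reflexive space $L^{q}([0,T),\mathbb{R}^m)$ ($1<q<\infty$). Uniqueness will come from strict convexity in $u$.

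\textbf{Finiteness of $V_T(x_0)$.} For $T<\infty$ this is immediate: $u\equiv 0$ gives $\mathcal{J}_T(x_0,0)=\int_0^T F(e^{tA}x_0)\,dt<\infty$, since $F$ is continuous. For $T=\infty$ I will use controllability of $(A,B)$ to pick a control $\bar u\in L^q([0,1],\mathbb{R}^m)$ that steers $x_0$ to $0$ at time $1$, for instance the Gramian-based control, which is continuous and bounded. I then extend $\bar u$ by zero. Since $F(0)=0$, the cost is finite.

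\textbf{Existence.} Let $(u_k)$ be a minimizing sequence with $\mathcal{J}_T(x_0,u_k)\le V_T(x_0)+1$. Since $F\ge 0$, this gives $\|u_k\|_{L^q}^q\le q(V_T(x_0)+1)$. By reflexivity, a subsequence converges weakly to some $u^*$ in $L^q([0,T))$. The case $T=\infty$ can be handled with weak convergence in $L^q(\mathbb{R}_{\ge0})$ directly.

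For each fixed $t$, the map $u\mapsto x_u(t)=e^{tA}x_0+\int_0^t e^{(t-s)A}Bu(s)\,ds$ is a continuous linear functional on $L^q([0,t])$, so $x_{u_k}(t)\to x_{u^*}(t)$ pointwise. Moreover, H\"older's inequality gives a uniform bound and equicontinuity on compact intervals, so by Arzel\`a--Ascoli the convergence is uniform on compact sets.

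Lower semicontinuity then follows in two pieces. The norm term is weakly lower semicontinuous: $\int\|u^*\|^q\le\liminf\int\|u_k\|^q$. The $F$-term satisfies $\int_0^{T'}F(x_{u^*})=\lim\int_0^{T'}F(x_{u_k})$ for every finite $T'\le T$, by uniform convergence and continuity of $F$. Since $F\ge 0$, letting $T'\to T$ (monotone convergence) yields $\mathcal{J}_T(x_0,u^*)\le\liminf_k \mathcal{J}_T(x_0,u_k)=V_T(x_0)$, so $u^*$ is a minimizer. Alternatively one can invoke \cite[Theorem 11]{lee1967foundations} as the paper suggests.

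\textbf{Uniqueness.} Suppose $u_1\ne u_2$ (on a set of positive measure) are both minimizers for the same $x_0$. By linearity, $\tfrac12(u_1+u_2)$ generates the trajectory $\tfrac12(x_1+x_2)$. Proposition~\ref{p:conv} then gives
\[
\mathcal{J}_T\bigl(x_0,\tfrac{u_1+u_2}{2}\bigr)\le \tfrac12\mathcal{J}_T(x_0,u_1)+\tfrac12\mathcal{J}_T(x_0,u_2)=V_T(x_0).
\]
The inequality is in fact strict, because $u\mapsto\|u\|^q/q$ is strictly convex for $q>1$ with the Euclidean norm. This contradicts minimality, so the minimizer is unique.

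\textbf{Main obstacle.} The delicate point is the infinite-horizon case. There, finiteness of $V_\infty$ genuinely needs controllability, and the lower semicontinuity on an unbounded interval has to pass through the truncation and monotone-convergence argument rather than uniform convergence on all of $\mathbb{R}_{\ge0}$.
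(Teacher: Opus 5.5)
Your proposal is correct and follows essentially the paper's route: existence by the direct method, i.e. weak compactness of a minimizing sequence in the reflexive space $L^q$ plus lower semicontinuity of the cost, which is what the cited classical result of Lee--Markus provides, and uniqueness by convexity. You also make explicit two points the paper leaves implicit: the infinite-horizon truncation/monotone-convergence step, and the fact that uniqueness needs \emph{strict} convexity of $u\mapsto\|u\|^q/q$ (Euclidean norm, $q>1$), not merely the convexity stated in Proposition~\ref{p:conv}.
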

%\begin{sketch} 
%Fix $T < \infty$,  the existence of a solution for $(OCP)_T$ is done using the direct method in the calculus of variations. Uniqueness of the resulting control follows from the strict convexity of the cost in the control variable.
%The proof for the existence and uniqueness of a minimizer for $(OCP)_\infty$ follows from the finite horizon problem and is done in \cite[Theorem 11]{lee1967foundations}.
%\end{sketch}
As a consequence of the previous result, we have the following.
\begin{prop}
\label{p:conv2}
If $T\in \mathbb{R}_{>0}$ or $T=\infty$ and $(A,B)$ is controllable then the function $V_T$ takes values in $\mathbb{R}$ and is convex. In particular, $V_T$ is continuous.
\end{prop}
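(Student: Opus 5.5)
Three facts are needed: $V_T$ is finite everywhere, $V_T$ is convex, and convexity plus finiteness gives continuity. Finiteness of $V_T$ means $V_T(x_0)<\infty$ for every $x_0$; nonnegativity is automatic since $\mathcal{L}\ge 0$. For $T<\infty$ I take the control $u\equiv 0$ on $[0,T)$. The trajectory $x(t)=e^{At}x_0$ is continuous on the compact interval $[0,T]$, and $F$ is continuous by (A1), so $\mathcal{J}_T(x_0,0)=\int_0^T F(e^{At}x_0)\,dt<\infty$.

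For $T=\infty$ I use controllability. By Kalman's criterion there is a control $\bar u$ on $[0,1]$ steering $x_0$ to $0$. One choice is the Gramian-based control $\bar u(t)=-B^\top e^{-A^\top t}W^{-1}x_0$ with $W=\int_0^1 e^{-As}BB^\top e^{-A^\top s}ds$; it is continuous, hence in $L^q([0,1])$. I extend it by $0$ for $t>1$. Since $F(0)=0$ by (A2), the state stays at the origin after time $1$ and the cost is
\[
\mathcal{J}_\infty(x_0,\bar u)=\int_0^1 \mathcal{L}(x(t),\bar u(t))\,dt<\infty .
\]
Thus $V_T(x_0)\in[0,\infty)$. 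This step also shows that the domain of $\mathcal{J}_\infty$ projects onto all of $\mathbb{R}^n$, which the convexity argument needs.

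For convexity, fix $x_0,y_0\in\mathbb{R}^n$ and $\lambda\in[0,1]$. By Theorem~\ref{thm1} there are minimizers $u,v$ with $V_T(x_0)=\mathcal{J}_T(x_0,u)$ and $V_T(y_0)=\mathcal{J}_T(y_0,v)$, both finite. The system~\eqref{eq:L} is linear, so the trajectory from $\lambda x_0+(1-\lambda)y_0$ under $\lambda u+(1-\lambda)v$ is the corresponding convex combination of the two trajectories. Proposition~\ref{p:conv} (for $T=\infty$, on the domain of $\mathcal{J}_\infty$, where both pairs lie) then gives
\[
V_T(\lambda x_0+(1-\lambda)y_0)\le \mathcal{J}_T(\lambda x_0+(1-\lambda)y_0,\lambda u+(1-\lambda)v)\le \lambda V_T(x_0)+(1-\lambda)V_T(y_0).
\]
Existence of minimizers is not essential here: an $\varepsilon$-minimizer argument works equally well.

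Continuity then follows from the classical fact that a real-valued convex function on $\mathbb{R}^n$ is continuous, indeed locally Lipschitz. The sketch is as follows. On a simplex containing a neighbourhood of a point, $V_T$ is bounded above by the maximum over the vertices. Convexity then yields a lower bound and the Lipschitz estimate on a smaller ball.

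The only step needing genuine care is finiteness for $T=\infty$, since it is the one that uses controllability. There the point to check is that the steering control lies in $L^q$; continuity of the Gramian-based control settles this.
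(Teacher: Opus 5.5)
Your proof is correct and follows the route the paper indicates. Finiteness comes first, then convexity of $V_T$ as the infimal projection of the jointly convex $\mathcal{J}_T$ (Proposition~\ref{p:conv} combined with linearity of the dynamics), and then continuity of finite convex functions. The only difference is that you prove finiteness directly, with explicit finite-cost controls (the zero control for $T<\infty$, the Gramian steering control extended by zero for $T=\infty$), instead of reading it off from the existence of minimizers in Theorem~\ref{thm1}; this makes your argument slightly more self-contained.
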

%\subsection{Preliminary results}
The following classical result (see e.g. \cite[Theorem 3.2]{hestenes1966calculus} for a more general formulation) %expresses an equivalence property among continuous positive-definite functions which are homogeneous with respect to the same family of dilations. 
expresses the fact that continuous positive-definite functions which are homogeneous with respect to the same family of dilations are commensurable.
\begin{lem}
    \label{lem1}
     Let $k \in \mathbb{N}_{\ge 1}$ and $W_1, W_2$ be two positive definite and continuous mappings from $\mathbb{R}^k$ to $\mathbb{R}_{\ge 0}$. Suppose 
    there exist $s \in \mathbb{R}_{>0}^k$ and $d_1,d_2 >0$ such that $W_1$ and $W_2$ are homogeneous of degree $d_1$ and $d_2$, respectively, with respect to the family of dilations $(\delta_{\epsilon}^s)_{\epsilon>0}$.
%    \[\forall i\in \lbrace1,2\rbrace,\forall \epsilon > 0, \forall x \in \mathbb{R}^m, \quad W_i(\delta^s_\epsilon(x))=\epsilon^{d_i}W_i(x).\]
     Then, there exist $c_1,c_2 > 0$ such that
     \[\forall x \in \mathbb{R}^k , \quad c_1W_1^\frac{d_2}{d_1}(x) \leq W_2(x) \leq c_2W_1^\frac{d_2}{d_1}(x).\]
     The inequality on the right remains true even if the assumption that $W_2$ is positive definite is dropped.
\end{lem}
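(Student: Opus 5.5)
The plan is to reduce the comparison to a compact "unit sphere" adapted to the dilations and then use continuity and compactness there. Define the homogeneous pseudo-norm
\[
N(x) := \sum_{i=1}^k |x_i|^{1/s_i}, \qquad x\in\mathbb{R}^k .
\]
It is continuous, positive definite, and homogeneous of degree $1$ with respect to $(\delta^s_\epsilon)_{\epsilon>0}$. Set $S := \{x : N(x)=1\}$. This set is closed, and it is bounded because each $|x_i|\le 1$ on $S$. Hence $S$ is compact, and it does not contain $0$.

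Every $x\neq 0$ can be written uniquely as $x=\delta^s_\epsilon(y)$ with $\epsilon = N(x)>0$ and $y = \delta^s_{1/\epsilon}(x)\in S$. Indeed, $N(\delta^s_{1/\epsilon}(x)) = \epsilon^{-1}N(x)=1$. The only point to check here is the homogeneity of $N$, which holds since $|\epsilon^{s_i}x_i|^{1/s_i} = \epsilon|x_i|^{1/s_i}$.

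Next, consider the ratio $\rho(y) := W_2(y)/W_1(y)^{d_2/d_1}$ on $S$. It is well defined and continuous, because $W_1$ is continuous and positive on $S$. By compactness, $\rho$ attains a maximum $c_2$ and a minimum $c_1$ on $S$. Since $W_2$ is positive definite, $W_2>0$ on $S$, so $c_1>0$. We also have $c_2>0$ by positivity; if positivity of $W_2$ is dropped, one may replace $c_2$ by $\max(c_2,1)>0$.

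Homogeneity makes the ratio scale-invariant. For $x=\delta^s_\epsilon(y)$ we get
\[
W_2(x)=\epsilon^{d_2}W_2(y), \qquad W_1(x)^{d_2/d_1} = \epsilon^{d_2}W_1(y)^{d_2/d_1}.
\]
Hence $\rho(x)=\rho(y)\in[c_1,c_2]$, which gives both inequalities for $x\ne0$. At $x=0$, continuity and homogeneity give $W_i(0)=\lim_{\epsilon\to0}\epsilon^{d_i}W_i(y)=0$, so both sides vanish there.

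For the final remark, the upper bound only used that $\rho$ is continuous on $S$, hence bounded above. This needs positivity of $W_1$ only, so the right inequality survives without assuming $W_2$ positive definite.

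I expect no real obstacle. The only delicate step is constructing the compact transversal $S$ and the decomposition $x=\delta^s_{N(x)}(y)$, which is why $N$ is chosen with exponents $1/s_i$.
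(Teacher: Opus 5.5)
Your proof is correct: normalizing onto the compact set $\{N=1\}$ with the degree-one homogeneous gauge $N(x)=\sum_i |x_i|^{1/s_i}$, and bounding there the dilation-invariant ratio $W_2/W_1^{d_2/d_1}$, is the standard compactness argument behind this classical result. The paper does not prove the lemma but only cites it from the literature, so your write-up supplies the argument it takes for granted, including the right-hand inequality when $W_2$ is not positive definite.
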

%\begin{sketch}
%    A general version of this lemma is proved in \cite[Theorem 3.2]{hestenes1966calculus}.
%\end{sketch}
%From the above lemma, we deduce a homogeneous upper-bound for the mapping $F$.
Since $F$ is continuously differentiable, by applying the previous lemma and the chain rule, we obtain the following result.
\begin{cor}
    \label{cor1}
    Let $\alpha_i=\frac{d}{r_i}$ and $\alpha_{i,j}=\frac{d-r_i}{r_j}$ for $i,j =1,\dots,n$. Then
    \begin{enumerate}
    \setlength{\itemsep}{5pt}
        \item There exist two constants $c_1,c_2 > 0$ %and a family $(\alpha_i)_{1 \leq i \leq n} \in \mathbb{R}_{>0}^n$ 
        such that 
    \[\forall x \in \mathbb{R}^n\text{, }  \quad  c_1\sum_{i=1}^n |x_i|^{\alpha_i}\leq F(x) \leq c_2\sum_{i=1}^n |x_i|^{\alpha_i}.\]
        \item The function $\partial_i F$ is homogeneous of degree $d-r_i$ with respect to $(\delta_{\epsilon}^r)_{\epsilon>0}$ and, for any $1\leq i \leq n$, there exists a constant $c_{i}> 0$ %and a family $(\alpha_{i,j})_{1\leq j\leq n} \in \mathbb{R}_{>0}^n$ 
        such that 
       \[\forall x \in \mathbb{R}^n\text{, }  \quad|\partial_i F(x)| \leq  c_i\sum_{j=1}^n |x_j|^{\alpha_{i,j}}.\]
        \end{enumerate}
\end{cor}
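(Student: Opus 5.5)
We apply Lemma~\ref{lem1} twice, in both cases with $k=n$, $s=r$, and the reference function
\[
W_1(x)=\sum_{j=1}^n |x_j|^{\alpha_j}, \qquad \alpha_j=\frac{d}{r_j}.
$$
This $W_1$ is continuous and positive definite. It is homogeneous of degree $d$ with respect to $(\delta^r_\epsilon)_{\epsilon>0}$, because $|\epsilon^{r_j}x_j|^{d/r_j}=\epsilon^d|x_j|^{d/r_j}$.

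\emph{Item 1.} Take $W_2=F$ with $d_1=d_2=d$. By (A1), (A2) and (A3), $F$ is continuous, positive definite and homogeneous of degree $d$. Lemma~\ref{lem1} then gives $c_1W_1\le F\le c_2W_1$, which is exactly the first claim.

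\emph{Item 2, homogeneity.} Fix $\epsilon>0$ and differentiate the identity $F(\delta^r_\epsilon(x))=\epsilon^dF(x)$ with respect to $x_i$. Since $\delta^r_\epsilon$ is linear and diagonal, the chain rule gives
\[
\epsilon^{r_i}\,\partial_iF(\delta^r_\epsilon(x))=\epsilon^d\,\partial_iF(x).
\]
Hence $\partial_iF(\delta^r_\epsilon(x))=\epsilon^{d-r_i}\partial_iF(x)$, so $\partial_iF$ is homogeneous of degree $d-r_i$. The exponent $d-r_i$ is positive by (A3), since $d>\max_j r_j$. Moreover $\partial_iF$ is continuous by (A1).

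\emph{Item 2, bound.} We cannot apply Lemma~\ref{lem1} to $W_2=\partial_iF$ directly, because $\partial_iF$ may take negative values. Instead set $W_2=|\partial_iF|$. It is continuous, nonnegative and homogeneous of degree $d_2=d-r_i>0$, but it need not be positive definite. This is precisely the situation covered by the last sentence of Lemma~\ref{lem1}, which gives
\[
|\partial_iF(x)|\le C\,W_1(x)^{(d-r_i)/d}.
\]

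It remains to turn this power of a sum into a sum of powers. Set $p=(d-r_i)/d\in(0,1)$. For $p\le1$ the map $t\mapsto t^p$ is subadditive on $\mathbb{R}_{\ge0}$, so $\left(\sum_j a_j\right)^p\le\sum_j a_j^p$ for nonnegative $a_j$. Applying this with $a_j=|x_j|^{\alpha_j}$ gives
\[
W_1(x)^p\le\sum_j |x_j|^{\alpha_j p}=\sum_j |x_j|^{(d-r_i)/r_j}=\sum_j |x_j|^{\alpha_{i,j}}.
\]
This yields the claim with $c_i=C$.

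The only delicate point is the sign issue for $\partial_iF$ just handled: one must pass to $|\partial_iF|$ and rely on the non-positive-definite version of the upper bound in Lemma~\ref{lem1}. Everything else is routine.
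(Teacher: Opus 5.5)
Your proof is correct and follows the route the paper indicates ("by applying the previous lemma and the chain rule"). You use Lemma~\ref{lem1} for item 1, then the chain rule together with the non-positive-definite upper bound of Lemma~\ref{lem1} applied to $|\partial_i F|$ for item 2, and you correctly supply the subadditivity step $\bigl(\sum_j a_j\bigr)^{p}\le\sum_j a_j^{p}$ that the paper leaves implicit.
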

%\begin{sketch}
%For the first item, it is a direct application of Lemma~\ref{lem1} with $\alpha_i=\frac{d}{r_i}-1$, $1\leq i \leq n$, while for the second we take  $\alpha_{i,j}=\frac{d-r_i}{r_j}$, $1\leq i,j \leq n$.
%   
%\end{sketch}
If $(A, B)$ is controllable, then it can be stabilized exponentially by means of a linear feedback. As a consequence, one can easily upper-bound 
%One can upper-bound $V_T$ and 
$V_\infty$ with a term that depends exclusively on the initial data.
\begin{prop}
    \label{prop1}
   If $(A,B)$ is controllable then there exists $C > 0$ such that
   % \[\forall x \in \mathbb{R}^n, \quad V_\infty(x) \leq C\|x\|\max\lbrace\|x\|^{q-1}, \|x\|^{\alpha_-},\|x_0\|^{\alpha^+}\rbrace,\]
   % with $\alpha_- = \min_{1\leq j\leq n} \frac{d}{r_j}-1$ and $\alpha^+=\max_{1\leq j\leq n} \frac{d}{r_j}-1$.
   \[V_T(x)\leq V_\infty(x) \leq C \max\{\|x\|^{\alpha_-},\|x\|^{\alpha_+}\}\]
   with $\alpha_- = \min\{q,\frac{d}{r_1},\dots,\frac{d}{r_n}\}$ and $\alpha_+= \max\{q,\frac{d}{r_1},\dots,\frac{d}{r_n}\}$.
\end{prop}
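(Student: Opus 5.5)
The first inequality $V_T\le V_\infty$ was already noted after the definition of $(OCP)_T$, so only the bound on $V_\infty$ needs proof. Since $V_\infty(x)$ is an infimum, it suffices to exhibit one admissible control whose cost has the desired bound. Controllability of $(A,B)$ gives a matrix $K\in\mathbb{R}^{m\times n}$ such that $A+BK$ is Hurwitz. Hence there are constants $M\ge 1$ and $\lambda>0$ with
\[
\|e^{(A+BK)t}\|\le Me^{-\lambda t}\quad\text{for all } t\ge 0 .
\]
I will use the feedback $u(t)=Kx(t)$, which gives the trajectory $x(t)=e^{(A+BK)t}x$, and the control $u(t)=Ke^{(A+BK)t}x$. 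This control lies in $L^q(\mathbb{R}_{\ge 0},\mathbb{R}^m)$, so it is admissible.

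Next I bound the integral cost term by term. For the control term,
\[
\frac{1}{q}\int_0^\infty\|u(t)\|^q\,dt\le \frac{\|K\|^qM^q}{q}\,\|x\|^q\int_0^\infty e^{-q\lambda t}\,dt=C_0\|x\|^q .
\]
For the state term, item 1 of Corollary~\ref{cor1} gives
\[
F(x(t))\le c_2\sum_{i=1}^n|x_i(t)|^{\alpha_i}\le c_2\sum_{i=1}^n M^{\alpha_i}e^{-\alpha_i\lambda t}\|x\|^{\alpha_i},
\]
using $|x_i(t)|\le\|x(t)\|$. Each exponent satisfies $\alpha_i=d/r_i>0$, so the time integral is finite and
\[
\int_0^\infty F(x(t))\,dt\le\sum_{i=1}^n C_i\|x\|^{\alpha_i},
\]
with constants $C_i$ independent of $x$.

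Combining the two bounds gives
\[
V_\infty(x)\le C_0\|x\|^q+\sum_{i=1}^n C_i\|x\|^{d/r_i}.
\]
Every exponent on the right lies in the interval $[\alpha_-,\alpha_+]$. For $\|x\|\le 1$ each power is at most $\|x\|^{\alpha_-}$, and for $\|x\|\ge 1$ each power is at most $\|x\|^{\alpha_+}$. Therefore
\[
V_\infty(x)\le (n+1)\max_i C_i\,\max\{\|x\|^{\alpha_-},\|x\|^{\alpha_+}\},
\]
where the maximum over $i$ includes $C_0$.

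There is no real obstacle in this argument. The only points needing care are that all exponents are strictly positive, which makes the exponential integrals finite, and that the separate power terms are merged correctly into the two-regime maximum.
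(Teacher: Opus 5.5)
Your proof is correct and follows essentially the same route as the paper. Both take a linear feedback $u=Kx$ with $A+BK$ Hurwitz, apply the bound $F(x)\le c_2\sum_i|x_i|^{\alpha_i}$ from Corollary~\ref{cor1} along the exponentially decaying closed-loop trajectory, and conclude from $V_T\le V_\infty\le\mathcal{J}_\infty(x,u)$. The only cosmetic difference is the order of operations: the paper first bounds the instantaneous cost pointwise by a single decaying exponential times $\max\{\|x\|^{\alpha_-},\|x\|^{\alpha_+}\}$ and then integrates, while you integrate each power separately and merge the exponents afterwards.
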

%\begin{sketch}
%    Consider $K \in \mathbb{R}^{n}$ such that $M\coloneqq A+BK^\top$ is Hurwitz, and introduce the feedback control $u_K=K^\top x$. Consider the associated trajectory $x_K(t):= e^{Mt}x_0$ and note from Corollary~\ref{cor1} that  $\mathcal{L}(x_K(t),u_K(t)) = \mathcal{L}(e^{Mt} x_0, K^\top e^{Mt} x_0) \leq C_0 e^{\lambda t} \max\{\|x_0\|^{\alpha_-},\|x_0\|^{\alpha_+}\}$, where $\lambda$ is any negative value larger than the spectral abscissa of $M$ and $C_0$ is a suitable positive constant. The result then follows from the estimate $V_T(x_0)\leq V_\infty(x_0) \leq  
%%    \mathcal{J}_\infty(x_0,u_K)$.
    %Substitute for $u_K$ and the associated trajectory $x_K: t \mapsto e^{Mt}x_0$ such that $V_\infty(x_0) \leq \int_0^\infty \mathcal{L}(u_K(t),x_K(t))dt$. Finally, use H\"older's inequality as well as Corollary~\ref{cor1} and standard upper-bound on $x_K$ to get the desired upper-bound.
%\end{sketch}
\subsection{Hamiltonian approach}
From here on, let us take $A=J_n$ and $B=e_n$, i.e., the system~\eqref{eq:L} encodes an integrator chain and $m=1$. For $T < \infty$ the solutions of $(OCP)_T$ satisfy the Maximum Principle (see e.g.~\cite{lee1967foundations}). More precisely, introduce the Hamiltonian $H$ of $(OCP)_T$ defined on $\mathbb{R}^n\times \mathbb{R}\times\mathbb{R}^n$ %\times\mathbb{R}_{\ge 0} $
by 
\[H(x,u,p)=p^\top(J_nx+ue_n)-\left(
\frac{|u|^{q}}{q}+F(x)\right).\]
Denoting by $u_T$ the minimizing control for $(OCP)_T$ with initial condition $x_0$, and by $x_T$ the associated trajectory, %associated trajectory $x_T$, 
the Maximum Principle establishes the existence of an absolutely continuous mapping $p_T$ such that, for any $t \in [0, T]$,
%\begin{align}
%    u_T(t)&=\argmin_{u\in\mathbb{R}} H(x_T(t),u,p_T(t)) = \lfloor p_T(t)_n\rceil^\frac{1}{q-1}, \label{eq1}\\[0.5em]
%    \dot{x}_T(t)&=\frac{\partial H}{\partial p} (x_T(t),u,p_T(t)) = J_nx_T(t)+u_T(t) e_n, \label{eq2}\\[0.5em]
%    \dot{p}_T(t)&=-\frac{\partial H}{\partial x} (x_T(t),u,p_T(t)) =-J_n^\top p_T(t)+\nabla F(x_T(t)), \label{eq3}\\[0.5em]
%    p_T(T)&=0. \label{eq4}
%\end{align}
\begin{equation}
u_T(t)=\argmin_{u\in\mathbb{R}} H(x_T(t),u,p_T(t)) = \lfloor p_T(t)_n\rceil^\frac{1}{q-1}, \label{eq1}
\end{equation}
and $(x_T,p_T)$ solves the Hamiltonian equations %\[(\dot x,\dot p) = \left(\frac{\partial H}{\partial p}(x,u_T,p),\frac{\partial H}{\partial p}(x,u_T,p)\right), i.e.,\]
\begin{align}
   \dot{x}& = J_n x+ \lfloor p_n\rceil^\frac{1}{q-1} e_n, \label{eq2}\\
    \dot{p}& =-J_n^\top p+\nabla F(x), \label{eq3}
\end{align}
with terminal (transversality) condition
\begin{equation}
    p(T)=0. \label{eq4}
\end{equation}
In the following, we will refer to the pair $(x_T,p_T)$ as the \emph{extremal lift} associated with the optimal trajectory $x_T$.
More generally, solutions of~\eqref{eq2}-\eqref{eq3} are said to be \emph{extremal pairs}.
%, and to any pair $(x_T,p_T)$ as \emph{extremal lift} associated with the optimal trajectory $x_T$.
%\begin{rem}

Since $F$ is only assumed to be of class $C^1$, the  equations~\eqref{eq2}-\eqref{eq3} do not necessarily satisfy the usual local Lipschitzness assumption guaranteeing uniqueness of the solution from any given initial condition. 
%In particular, it can be shown that a solution $(x,p)$ of~\eqref{eq2}-\eqref{eq3} is the unique solution (in positive time) starting from $(x(0),p(0))$ if and only if $(x(t),p(t))\neq (0,0)$ for every $t\geq 0$.
%\end{rem}
However, we establish a uniqueness result for solutions of~\eqref{eq2}--\eqref{eq4}.
\begin{prop}
    \label{prop5}
    Let $T > 0$ and $x_0 \in \mathbb{R}^n$. Then, the equations~\eqref{eq2}--\eqref{eq4} admit a unique solution. %, the problem $(OCP)_T$ admits a unique [* extremal *] solution.
\end{prop}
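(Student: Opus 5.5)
Existence of a solution follows from Theorem~\ref{thm1} and the Maximum Principle: the minimizer $u_T$ of $(OCP)_T$ has an extremal lift $(x_T,p_T)$ solving \eqref{eq2}--\eqref{eq4}. Note that for the Bolza problem on a finite horizon with free endpoint, the Maximum Principle is always normal, since an abnormal multiplier would force $p\equiv0$ by \eqref{eq4}. The real content is uniqueness. My plan is to show that \emph{any} solution $(x,p)$ of \eqref{eq2}--\eqref{eq4} on $[0,T]$ yields an optimal control $u=\lfloor p_n\rceil^{\frac{1}{q-1}}$ for $(OCP)_T$. Theorem~\ref{thm1} then gives $u=u_T$, hence $x=x_T$.

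To show such a $u$ is optimal I will use convexity, i.e.\ sufficiency of the Maximum Principle for convex problems. Take any $v\in L^q([0,T),\mathbb{R})$ with trajectory $y$ from $x_0$. Convexity of $F$ (assumption (A2)) gives $F(y)\ge F(x)+\nabla F(x)^\top(y-x)$. Convexity of $u\mapsto |u|^q/q$, whose derivative at $u$ is $\lfloor u\rceil^{q-1}=p_n$, gives $|v|^q/q\ge |u|^q/q+p_n(v-u)$. Integrating,
\[
\mathcal{J}_T(x_0,v)-\mathcal{J}_T(x_0,u)\ge\int_0^T \Big(\nabla F(x)^\top(y-x)+p_n(v-u)\Big)dt .
\]
Then I use \eqref{eq3}: $\nabla F(x)=\dot p+J_n^\top p$. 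Together with $\dot y-\dot x=J_n(y-x)+(v-u)e_n$, the integrand equals $\frac{d}{dt}\big(p^\top(y-x)\big)$. Its integral is $p(T)^\top(y(T)-x(T))-p(0)^\top(y(0)-x(0))=0$, by \eqref{eq4} and $y(0)=x(0)=x_0$. Hence $u$ is a minimizer.

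This settles uniqueness of $x$ and $u$. Uniqueness of $p$ then follows. Since $u=\lfloor p_n\rceil^{\frac{1}{q-1}}$ is a bijective relation, $p_n$ is determined. With $x$ now fixed, \eqref{eq3} is a linear ODE in $p$ with known forcing $\nabla F(x(t))$ and terminal condition $p(T)=0$, so $p$ is unique. Here the lack of Lipschitzness is irrelevant, because $x$ is already fixed.

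The main obstacle I anticipate is regularity and integrability in the integration by parts. I need $p^\top(y-x)$ to be absolutely continuous and $p_n(v-u)$ to be integrable. Both hold because $p$ is absolutely continuous, hence bounded on $[0,T]$, and $v,u\in L^q\subset L^1$ on a finite interval; note $u\in L^\infty$ since $p$ is bounded. One also has to handle the case $\mathcal{J}_T(x_0,v)=\infty$, which is trivial. Finally, I must check that \eqref{eq1} reflects the correct normalization and sign convention in $H$. Since $H$ is defined with the cost subtracted and the maximum is taken in $u$, the correct relation is $u=\lfloor p_n\rceil^{1/(q-1)}$, which is consistent with the sign computation above.
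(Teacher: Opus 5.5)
Your proof is correct, but it takes a different route from the paper. The paper never passes through the optimal control problem. It takes two solutions $(x,p)$, $(\bar x,\bar p)$ of \eqref{eq2}--\eqref{eq4} and sets $\Phi=(x-\bar x)^\top(p-\bar p)$. It then computes $\dot\Phi=(\lfloor p_n\rceil^{\frac{1}{q-1}}-\lfloor \bar p_n\rceil^{\frac{1}{q-1}})(p_n-\bar p_n)+(x-\bar x)^\top(\nabla F(x)-\nabla F(\bar x))\geq 0$. Since $\Phi(0)=\Phi(T)=0$, this forces $\dot\Phi\equiv 0$. Strict monotonicity then gives $p_n=\bar p_n$, hence $x=\bar x$, hence $p=\bar p$ from the linear equation for $p-\bar p$.

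Your argument is the one-sided version of the same identity. Write your relation $\frac{d}{dt}\bigl(p^\top(y-x)\bigr)=\nabla F(x)^\top(y-x)+p_n(v-u)$ once with $(x,p)$ as reference and once with $(\bar x,\bar p)$, and add the two. The result is exactly $\int_0^T\dot\Phi\,dt=\Phi(T)-\Phi(0)$, with your convexity inequalities replaced by monotonicity of $\nabla F$ and of $s\mapsto\lfloor s\rceil^{\frac{1}{q-1}}$.

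The paper's version is self-contained: uniqueness becomes a statement about the boundary value problem alone. It needs neither Theorem~\ref{thm1} (existence, and uniqueness of minimizers via strict convexity of $\mathcal{J}_T(x_0,\cdot)$) nor any discussion of costs.

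Your version proves something stronger along the way: every solution of \eqref{eq2}--\eqref{eq4} is optimal for $(OCP)_T$, i.e.\ the Maximum Principle is sufficient under convexity. You also give an explicit existence argument via normality of the Maximum Principle, which the paper leaves implicit. That identification of extremals with minimizers is the kind of link the paper relies on later in Proposition~\ref{cor3}.
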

\begin{proof}
    Let $T >0$ and $x_0 \in \mathbb{R}^n$. Suppose that $(\bar{x},\bar{p})$ and $(x,p)$ are solutions of~\eqref{eq2}--\eqref{eq4}. Setting \[\Phi(t):=(x(t)-\bar x(t))^\top (p(t)-\bar p(t)),\] a direct computation shows that
   \begin{align*}
 \dot  \Phi& =  (\lfloor p_n \rceil^{\frac{1}{q-1}}-\lfloor \bar{p}_n \rceil^{\frac{1}{q-1}})(p_n-\bar{p}_n)\\
   & \ + (x-\bar x)^\top (\nabla F(x) - \nabla F(\bar x)).
   \end{align*} 
    The first term on the right-hand side of the previous equality is nonnegative independently of $p_n,\bar{p}_n$, %, i.e., if the extremal trajectories correspond to the same control input. 
while the second term is always nonnegative as a consequence of the convexity of $F$, so that $\dot \Phi\geq 0$. Moreover $\Phi(0)=\Phi(T)=0$, from which we deduce that $\dot \Phi$ is identically equal to zero on $[0,T]$. We deduce that $p_n=\bar{p}_n$, i.e., the extremal trajectories correspond to the same control input. In turn, this implies that $x=\bar x$ and, from~\eqref{eq3}, that $\frac{d}{dt}(p-\bar p) = -J_n^\top (p-\bar p)$. By~\eqref{eq4} we deduce that $p=\bar p$, concluding the proof of the proposition.
%     %[* extremal *] solutions of $(OCP)_T$ and denote by $\bar{u}$ (resp. $u$) the optimal control associated with the pair $(\bar{x},\bar{p})$ (resp. $(x,p)$). Using equations (\ref{eq1})--(\ref{eq3}) as well as the convexity of the mapping $F$, one establishes the following upper-bound
%    \begin{align}
%        \int_0^T\!(\bar{u}(t)\!-\!u(t))(\lfloor\bar{u}(t)\rceil^{q-1}\!-\!\lfloor u(t)\rceil^{q-1})dt &\leq \langle  \Delta x(T), \Delta p(T) \rangle \label{eq21}
%    \end{align}
%    where $\Delta x(T)=\bar{x}(T)-x(T)$ and $\Delta p(T)= \bar{p}(T)-p(T)$.
%    Moreover, one can show that $x(T)=0 \implies p(T)=0$. Using (\ref{eq4}) i.e. the transversal condition, $\bar{p}(T)=0$ so deduce that the right-hand side of (\ref{eq21}) equals 0. Positivity of the left-hand side of (\ref{eq21}) implies that it also equals $0$. Since the mapping $(\bar{u}-u)(\lfloor\bar{u}\rceil^{q-1}-\lfloor u\rceil^{q-1})$ is continuous and positive whose integral on $[0,T]$ is zero, we deduce that $\bar{u} \equiv u$ on $[0,T]$.
%    Since both $\bar{x}$ and $x$ are solutions of~\eqref{eq:L}
%    then if $\xi \coloneqq \bar{x}-x$ is such that
%    for $t \in [0,T]$, $\dot{\xi}(t)=J_n\xi(t)$ and $\xi(0)=0$
%    so that $\xi \equiv 0$ i.e. $\bar{x} \equiv x$.
%    Similarly, considering the difference $\bar{p}-p$ and exploiting (\ref{eq3}), one shows that $\bar{p}\equiv p$ on $[0,T]$.
\end{proof}

Next, we adapt the conditions of the Maximum Principle to the infinite-time horizon problem. Its proof will be given in a forthcoming publication.
\begin{thm}
\label{thm2}
Let $u_{\infty}$ and $x_\infty$ be the optimal control and associated trajectory, respectively, for $(OCP)_\infty$ starting from $x_0$. Then $x_\infty$ admits a unique extremal lift $(x_\infty,p_\infty)$, i.e. a unique solution of the Hamiltonian dynamics \eqref{eq2}-\eqref{eq3}, and $u_{\infty}$ satisfies~\eqref{eq1}.
%$p^\infty_0 \in \mathbb{R}^n$ such that the pair $(x_\infty(\cdot),p_\infty(\cdot))$ is a solution of the Hamiltonian dynamics (\ref{eq1})--(\ref{eq3}) starting from $(x_0,p_0^\infty)$.
\end{thm}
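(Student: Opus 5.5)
The plan is to obtain the extremal lift of $x_\infty$ as a limit of the finite-horizon lifts $(x_T,p_T)$ of Proposition~\ref{prop5} as $T\to\infty$. Uniqueness will then follow from a transversality-at-infinity argument modeled on the proof of Proposition~\ref{prop5}.

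\textbf{Step 1: convergence of the finite-horizon optimal controls.} Fix $x_0$. Since $V_T(x_0)\le V_\infty(x_0)<\infty$ by Proposition~\ref{prop1}, the controls $u_T$ are bounded in $L^q$ uniformly in $T$. Extending them by zero beyond $T$, we extract a subsequence $T_k\to\infty$ with $u_{T_k}\rightharpoonup \tilde u$ weakly in $L^q(\mathbb{R}_{\ge0})$. Then $x_{T_k}\to\tilde x$ uniformly on compacts. Weak lower semicontinuity of the convex cost on each $[0,S]$ gives
\[
\mathcal J_S(x_0,\tilde u)\le \liminf_k V_{T_k}(x_0)\le V_\infty(x_0).
\]
Letting $S\to\infty$ shows $\tilde u$ is optimal, so $\tilde u=u_\infty$ by Theorem~\ref{thm1}. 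The convergence of the costs $\mathcal J_{T_k}\to V_\infty$, together with uniform convexity of $|u|^q/q$, then upgrades this to strong $L^q_{loc}$ convergence of $u_{T_k}$ to $u_\infty$. Hence $\lfloor p_{T_k,n}\rceil^{q-1}\to u_\infty$ in $L^q_{loc}$.

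\textbf{Step 2: bounds on $p_T(0)$ and passage to the limit.} This is the main obstacle: we need a bound on $p_{T}(0)$ uniform in $T$. I would use the sensitivity relation $p_T(0)\in\partial V_T(x_0)$. This relation follows from convexity: for any $y_0$, compare $\mathcal J_T(y_0,\cdot)$ with $\mathcal J_T(x_0,u_T)$ along the difference of trajectories, using $\Phi$-type computations to get
\[
V_T(y_0)\ge V_T(x_0)+p_T(0)^\top(y_0-x_0)
\]
(with the appropriate sign convention for $H$). Since the $V_T$ are convex and locally uniformly bounded by $V_\infty$ (Proposition~\ref{prop1}), their subgradients on a ball are uniformly bounded. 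Hence $p_{T_k}(0)$ has a convergent subsequence with limit $p_0$. By \eqref{eq3}, $p_T$ is given explicitly by the variation-of-constants formula in terms of $x_T$ and $\nabla F(x_T)$, which converge locally uniformly since $F\in C^1$. Therefore $p_{T_k}\to p_\infty$ locally uniformly, where $p_\infty$ solves \eqref{eq3} along $x_\infty$, and \eqref{eq1} passes to the limit. This gives existence, and moreover $p_\infty(0)\in\partial V_\infty(x_0)$.

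\textbf{Step 3: uniqueness.} Suppose $(x_\infty,p)$ and $(x_\infty,\bar p)$ are two lifts. Since $p_n=\bar p_n$ is forced by \eqref{eq1} (the map $\lfloor\cdot\rceil^{1/q-1}$ is injective), the difference $\pi=p-\bar p$ satisfies $\dot\pi=-J_n^\top\pi$ with $\pi_n\equiv0$. An induction shows $\pi_{n-1}=\dots=\pi_1=0$, using the successive equations $\dot\pi_{k+1}=-\pi_k$. So the lift is unique once it exists, and no asymptotic condition is even needed here.

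The delicate point to double-check is the subgradient inclusion in Step 2. If it fails, I would fall back on a bound of $p_T(0)$ via $x_0^\top p_T(0)$ obtained from the $\Phi$ computation comparing $(x_T,p_T)$ with the trivial extremal $(0,0)$, and then control the remaining components using \eqref{eq3} backward from $p_T(T)=0$.
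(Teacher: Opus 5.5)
The paper does not prove this theorem (the proof is deferred to a forthcoming publication), so there is no argument of the paper's to compare against. Judged on its own, your finite-horizon approximation argument is correct, and the step you flag as delicate does hold.

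Here is the sensitivity computation, with the paper's sign convention. Take any $y_0$ and any $v\in L^q([0,T))$ with trajectory $y$. Use the convexity inequalities $\frac{|v|^q}{q}\ge \frac{|u_T|^q}{q}+\lfloor u_T\rceil^{q-1}(v-u_T)$ and $F(y)\ge F(x_T)+\nabla F(x_T)^\top(y-x_T)$, together with $\lfloor u_T\rceil^{q-1}=p_{T,n}$ and \eqref{eq3}. These give exactly
\[
\mathcal{L}(y,v)-\mathcal{L}(x_T,u_T)\ \ge\ \tfrac{d}{dt}\big(p_T^\top(y-x_T)\big).
\]
Integrating and using \eqref{eq4} yields $V_T(y_0)\ge V_T(x_0)-p_T(0)^\top(y_0-x_0)$, i.e.\ $-p_T(0)\in\partial V_T(x_0)$. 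This is consistent with $\nabla V_\infty(x_0)=-p_\infty(0)$ in Proposition~\ref{prop14}.

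Now choose $y_0=x_0-p_T(0)/\|p_T(0)\|$ and use $V_T\ge 0$. This gives $\|p_T(0)\|\le\max_{\|y-x_0\|\le 1}V_\infty(y)$, which is finite by Proposition~\ref{prop1}. Convexity of $V_T$ is not even needed for this bound. Your uniqueness step is correct and is the same observability argument the paper uses in the proof of Proposition~\ref{cor3}.

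There are three small corrections.
\begin{itemize}
\item In ``$\lfloor p_{T_k,n}\rceil^{q-1}\to u_\infty$'' the exponent should be $\frac{1}{q-1}$.
\item ``Uniform convexity of $|u|^q/q$'' is false for $q\neq 2$. The correct tool is the Radon--Riesz property of the uniformly convex space $L^q$, applied after localizing the cost convergence to $[0,S]$ via lower semicontinuity on both $[0,S]$ and $[S,\infty)$.
\item That strong convergence is superfluous anyway. Once variation of constants gives $p_{T_k}\to p_\infty$ locally uniformly, weak convergence $u_{T_k}\rightharpoonup u_\infty$ already identifies $u_\infty=\lfloor p_{\infty,n}\rceil^{\frac{1}{q-1}}$ a.e.
\end{itemize}

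It is worth saying explicitly what your route buys. Each free-endpoint finite-horizon problem is normal and the costates stay bounded, so the limiting lift is automatically normal. A direct infinite-horizon maximum principle would a priori allow an abnormal multiplier and gives no transversality condition. You also get $-p_\infty(0)\in\partial V_\infty(x_0)$ as a by-product, which is a first step toward Proposition~\ref{prop14}.
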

Note that by Theorem~\ref{thm2} the optimal control is continuous, so that chattering is not present.

\section{Main Results}
\label{sec-main}
In this section, we shall investigate and exploit the properties of the value function to prove a result on finite-time stabilization of the integrator chain.

\subsection{Finite-time stabilization}
Introduce the Hamiltonian vector field
\[\mathcal{F}: (x,y) \in \mathbb{R}^n\times\mathbb{R}^n \mapsto \begin{pmatrix}
    J_n x+e_n\lfloor y_n\rceil^\frac{1}{q-1} \\
    -J_n^\top y +\nabla F(x)
\end{pmatrix}.\]
The following result can be shown by straightforward computations.
\begin{prop}
    \label{prop3}
    Let $d,\mu \in \mathbb{R}_{>0}$ such that $\mu < \frac{d(q-1)}{qn}$. In accordance with Assumption (A3), assume that $F$ is homogeneous of degree $d$ with respect to the family $(\delta^r_\epsilon)_{\epsilon>0}$ where  the weights $r_i$ are defined as
   % Introduce    \[r_1 = \frac{d}{q} +n\mu \qquad \text{ and } \qquad s_1 = \frac{d(q-1)}{q} -(n-1)\mu.\]
 %   Define the following finite sequences :
     \begin{equation}
        \forall i \in \lbrace1,\dots,n\rbrace\text{, } \quad r_i =\frac{d}{q}+(n-i+1)\mu.\label{eq12}
            \end{equation}
Furthermore, define
             \begin{equation}
        \forall i \in \lbrace1,\dots,n \rbrace\text{, } \quad s_i=  \frac{d(q-1)}{q}-(n-i)\mu.
        \label{eq13}
    \end{equation}
%    \begin{align}
%        \forall i \in \lbrace1,\dots,n\rbrace\text{, } \quad r_i =\frac{d}{q}+(n-i+1)\mu,\label{eq12}\\[1em]
%        \forall i \in \lbrace1,\dots,n \rbrace\text{, } \quad s_i=  \frac{d(q-1)}{q}-(n-i)\mu.
%        \label{eq13}
%    \end{align}
    Then, the vector field $\mathcal{F}$ is homogeneous of degree $-\mu$ with respect to the family 
    $(\delta^{(r,s)}_\epsilon)_{\epsilon > 0}$, where $\delta^{(r,s)}_\epsilon(x,p):= (\delta^r_\epsilon(x), \delta^s_\epsilon(p))$.
\end{prop}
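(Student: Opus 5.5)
The plan is to verify Definition~\ref{def3} directly, component by component. With the weights $(r,s)$, the vector field $\mathcal{F}$ must satisfy $\mathcal{F}(\delta^{(r,s)}_\epsilon(x,y)) = \epsilon^{-\mu}\delta^{(r,s)}_\epsilon(\mathcal{F}(x,y))$. Concretely, the $i$-th $x$-component of $\mathcal{F}$ must scale like $\epsilon^{r_i-\mu}$ and the $i$-th $y$-component like $\epsilon^{s_i-\mu}$. I will write out the components explicitly. Since $(J_nx)_i = x_{i+1}$ for $i<n$ and $(J_nx)_n=0$, the $x$-part has components $x_{i+1}$ for $i\le n-1$ and $\lfloor y_n\rceil^{\frac{1}{q-1}}$ for $i=n$. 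Since $(J_n^\top y)_i = y_{i-1}$ for $i\ge 2$ and $(J_n^\top y)_1=0$, the $y$-part has components $\partial_1F(x)$ for $i=1$ and $-y_{i-1}+\partial_iF(x)$ for $i\ge2$.

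The verification reduces to a list of exponent identities.
\begin{enumerate}
\item For $i\le n-1$, the component $x_{i+1}$ scales by $\epsilon^{r_{i+1}}$, so we need $r_{i+1}=r_i-\mu$. This is immediate from \eqref{eq12}.
\item For $i=n$, the component $\lfloor \epsilon^{s_n}y_n\rceil^{\frac1{q-1}} = \epsilon^{\frac{s_n}{q-1}}\lfloor y_n\rceil^{\frac1{q-1}}$, by positivity of $\epsilon$. We need $\frac{s_n}{q-1}=r_n-\mu$. Indeed $s_n=\frac{d(q-1)}{q}$ and $r_n-\mu=\frac dq$.
\item For the $y$-part, $\partial_iF$ is homogeneous of degree $d-r_i$ with respect to $\delta^r_\epsilon$ by Corollary~\ref{cor1}(2), so $\partial_iF(\delta^r_\epsilon x)=\epsilon^{d-r_i}\partial_iF(x)$. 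This follows from differentiating (A3) in $x_i$ via the chain rule. We therefore need $d-r_i=s_i-\mu$. Checking: $s_i-\mu=\frac{d(q-1)}{q}-(n-i+1)\mu = d-r_i$, as required.
\item For $i\ge2$, the term $y_{i-1}$ scales by $\epsilon^{s_{i-1}}$, and we need $s_{i-1}=s_i-\mu$, which is clear from \eqref{eq13}.
\end{enumerate}
Since both terms of the $i$-th $y$-component carry the same factor, their sum does too, so the $y$-part is handled.

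The hypothesis $\mu<\frac{d(q-1)}{qn}$ is only needed to guarantee $s_1=\frac{d(q-1)}{q}-(n-1)\mu>0$, and hence $s_i>0$ for all $i$, so that $\delta^{(r,s)}_\epsilon$ is a genuine weighted dilation in the sense of Definition~\ref{def1}. In fact the hypothesis gives $s_1>\mu>0$. The weights $r_i$ are clearly positive. One should also check that (A3) is consistent with these $r_i$, namely $d>\max_i r_i=r_1=\frac dq+n\mu$. This is exactly $\mu<\frac{d(q-1)}{qn}$, so the same condition serves both purposes.

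There is no real obstacle here. The only point needing care is the homogeneity of $\partial_iF$: it requires $F\in C^1$ and differentiating the identity $F(\delta^r_\epsilon x)=\epsilon^dF(x)$ with respect to $x_i$, which gives $\epsilon^{r_i}\partial_iF(\delta^r_\epsilon x)=\epsilon^d\partial_iF(x)$.
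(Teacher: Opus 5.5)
Your proposal is correct and follows the same route as the paper, which only says the result follows by straightforward computation. You verify Definition~\ref{def3} component by component, and each exponent identity ($r_{i+1}=r_i-\mu$, $\frac{s_n}{q-1}=r_n-\mu$, $d-r_i=s_i-\mu$, $s_{i-1}=s_i-\mu$) checks out, as does the degree-$(d-r_i)$ homogeneity of $\partial_iF$. Your observation that the single condition $\mu<\frac{d(q-1)}{qn}$ both makes all $s_i$ positive and makes (A3) consistent with $r_1=\frac{d}{q}+n\mu$ is also accurate.
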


%\begin{sketch}
  %  Let $r_1,s_1,\mu$ as in the statement of the proposition. The equations (\ref{eq12})--(\ref{eq13}) are obtained by investigating the equality from Definition~\ref{def3} for $\mathcal{F}$.
%\end{sketch}
%Let $\epsilon > 0$ and $\delta^{(r,s)}_\epsilon$ be the family of dilation such that $r,s$ satisfy (\ref{eq12})--(\ref{eq13}) given by Proposition~\ref{prop3}.
Set $\tilde{\mathcal{L}}(x,p) = \mathcal{L}(x,\lfloor p_n\rceil^{\frac{1}{q-1}})$, where $\mathcal{L}$ is  the instantaneous cost defined in \eqref{eq:inst-cost}. From the homogeneity of $F$ with respect to $(\delta^r_\epsilon)_{\epsilon>0}$ and the definition of $s_n$ in~\eqref{eq13}, one easily gets that $\tilde{\mathcal{L}}$ is homogeneous of degree $d$ with respect to the family of dilations $(\delta^{(r,s)}_\epsilon)_{\epsilon>0}$.
%Let $z=(x,p)$ be an extremal lift, and let us define
%a pair solution of the equations $(\ref{eq1})$--$(\ref{eq3})$, let us also define 
%\[z_\epsilon : t \mapsto \delta^{(r,s)}_\epsilon(z(\epsilon^{-\mu} t)).\]
We next show a homogeneity property of the value function $V_\infty$.
\begin{lem}
    \label{lem4}
         Let $z_0=(x_0,p_0) \in \mathbb{R}^n\times\mathbb{R}^n$ and $z=(x,p)$ %a solution of (\ref{eq1})--(\ref{eq3}) 
         be an extremal pair starting from $z_0$. Consider the map $z_\epsilon(t):=  \delta^{(r,s)}_\epsilon(z(\epsilon^{-\mu} t))$. The following properties hold:
        \begin{enumerate}
        \setlength{\itemsep}{5pt}
            \item $z_\epsilon$ is an extremal pair %solution of (\ref{eq1})--(\ref{eq3}) 
            starting from $\delta^{(r,s)}_\epsilon(z_0)$.
            \item  The mapping from $\mathbb{R}^n\times\mathbb{R}^n$ to $[0,\infty]$  given by
        \[\mathcal{V}(z_0)= \int_0^\infty \tilde{\mathcal{L}}(z(\tau))d\tau\]
        \noindent
        is homogeneous of degree $d+\mu$ with respect to $(\delta^{(r,s)}_\epsilon)_{\epsilon>0}$.
        Moreover, $V_\infty$ is homogeneous of degree $d +\mu$ with respect to  $(\delta^r_\epsilon)_{\epsilon>0}$, and $z$ is the extremal lift of an optimal trajectory if and only if $z_\epsilon$ is.
        \end{enumerate}
\end{lem}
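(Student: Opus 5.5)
Item 1 follows directly from Proposition~\ref{prop3}. Write $\mathcal{F}(\delta^{(r,s)}_\epsilon(z)) = \epsilon^{-\mu}\delta^{(r,s)}_\epsilon(\mathcal{F}(z))$, and recall that $\delta^{(r,s)}_\epsilon$ is linear. Then
\[\dot z_\epsilon(t) = \epsilon^{-\mu}\delta^{(r,s)}_\epsilon\bigl(\mathcal{F}(z(\epsilon^{-\mu}t))\bigr) = \mathcal{F}\bigl(\delta^{(r,s)}_\epsilon(z(\epsilon^{-\mu}t))\bigr)=\mathcal{F}(z_\epsilon(t)),\]
and $z_\epsilon(0)=\delta^{(r,s)}_\epsilon(z_0)$. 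Since solutions of~\eqref{eq2}-\eqref{eq3} need not be unique, $\mathcal{V}(z_0)$ should be read along the chosen extremal pair $z$. Even so, the correspondence $z\mapsto z_\epsilon$ is a bijection between extremal pairs from $z_0$ and extremal pairs from $\delta^{(r,s)}_\epsilon(z_0)$, with inverse given by $\epsilon\mapsto\epsilon^{-1}$.

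For the homogeneity of $\mathcal{V}$, I would use the homogeneity of degree $d$ of $\tilde{\mathcal{L}}$ already noted. This gives $\tilde{\mathcal{L}}(z_\epsilon(t)) = \epsilon^d\tilde{\mathcal{L}}(z(\epsilon^{-\mu}t))$, and the change of variables $\tau=\epsilon^{-\mu}t$ yields
\[\int_0^\infty \tilde{\mathcal{L}}(z_\epsilon(t))\,dt = \epsilon^{d+\mu}\int_0^\infty\tilde{\mathcal{L}}(z(\tau))\,d\tau .\]
This identity also holds when both sides are $+\infty$.

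The statement about $V_\infty$ is the substantive part. I would argue directly at the level of controls, which avoids the Maximum Principle. Given $u\in L^q(\mathbb{R}_{\ge0})$ with trajectory $x$ from $x_0$, set $u_\epsilon(t)=\epsilon^{s_n/(q-1)}u(\epsilon^{-\mu}t)$ and $x_\epsilon(t)=\delta^r_\epsilon(x(\epsilon^{-\mu}t))$. One checks that $x_\epsilon$ is the trajectory of the integrator chain driven by $u_\epsilon$ from $\delta^r_\epsilon(x_0)$. This uses $r_i-\mu=r_{i+1}$ for the chain components. For the last component it uses $r_n-\mu=d/q=s_n/(q-1)$, since $s_n=d(q-1)/q$. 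Moreover $|u_\epsilon|^q=\epsilon^{d}|u(\epsilon^{-\mu}t)|^q$ and $F(x_\epsilon)=\epsilon^dF(x(\epsilon^{-\mu}t))$, so $\mathcal{J}_\infty(\delta^r_\epsilon x_0,u_\epsilon)=\epsilon^{d+\mu}\mathcal{J}_\infty(x_0,u)$. The map $u\mapsto u_\epsilon$ is a bijection of $L^q$, with inverse given by $\epsilon^{-1}$. Taking infima gives $V_\infty(\delta^r_\epsilon x_0)=\epsilon^{d+\mu}V_\infty(x_0)$, and the same scaling sends the unique optimal control of Theorem~\ref{thm1} to the optimal control from the dilated point.

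To conclude the last claim, take $z$ to be the extremal lift of the optimal $x_\infty$ from Theorem~\ref{thm2}. Then $z_\epsilon$ has first component $x_\epsilon$, which is optimal by the previous paragraph. The control associated with $z_\epsilon$ through~\eqref{eq1} is $\lfloor \epsilon^{s_n}p_n\rceil^{1/(q-1)}=u_\epsilon$, so $z_\epsilon$ is an extremal pair over $x_\epsilon$. By the uniqueness of the extremal lift in Theorem~\ref{thm2}, $z_\epsilon$ is the lift of the optimal trajectory. The converse direction follows by applying the same argument with $\epsilon^{-1}$.

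The main point needing care is this identification with the unique lift. The argument relies on the uniqueness part of Theorem~\ref{thm2}, because $\mathcal{F}$ is not Lipschitz and the extremal pair is not determined by its initial condition alone.
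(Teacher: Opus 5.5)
Your proof is correct. For Item 1 and the homogeneity of $\mathcal{V}$ it matches the paper's argument: Proposition~\ref{prop3} together with the change of variables $\tau=\epsilon^{-\mu}t$.

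You diverge on the homogeneity of $V_\infty$. The paper does not rescale controls. It writes $V_\infty(x_0)=\inf_{p_0}\mathcal{V}(x_0,p_0)$ and combines the homogeneity of $\mathcal{V}$ with the bijectivity of $\delta^s_\epsilon$, so the infimum over $p_0$ commutes with the dilation. The claim about lifts is left implicit there; it follows because $z$ attains that infimum exactly when $z_\epsilon$ does.

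The paper's route is shorter because it reuses the identity just proved for $\mathcal{V}$. However, the formula $V_\infty=\inf_{p_0}\mathcal{V}$ silently relies on the infinite-horizon Maximum Principle (Theorem~\ref{thm2}, whose proof is deferred) to know that the optimal trajectory has an extremal lift. It also treats $\mathcal{V}$ as single-valued, although extremal pairs from a given $z_0$ need not be unique.

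Your scaling $u\mapsto u_\epsilon$ is a bijection of $L^q$ that multiplies $\mathcal{J}_\infty$ by $\epsilon^{d+\mu}$. It therefore gives the homogeneity of $V_\infty$, and the correspondence between optimal controls, without any Maximum Principle. You also make the non-uniqueness issue explicit. Theorem~\ref{thm2} enters only in the final claim about lifts, where it is unavoidable because that claim is itself about extremal lifts.

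One side remark: the uniqueness of the lift over a fixed trajectory, which you invoke, is elementary. Indeed \eqref{eq2} forces $p_n=\lfloor \dot x_n\rceil^{q-1}$, and \eqref{eq3} then determines $p_{n-1},\dots,p_1$ recursively via $p_{i-1}=\partial_iF(x)-\dot p_i$.
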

\begin{proof}
Item 1. easily follows from Proposition~\ref{prop3} and the homogeneity of $\tilde{\mathcal{L}}$. Concerning Item 2., one has
\begin{align*}
\mathcal{V}(\delta^{(r,s)}_\epsilon(z_0))&= \int_0^\infty \tilde{\mathcal{L}}(z_\epsilon(\tau))d\tau = \epsilon^d \int_0^\infty \tilde{\mathcal{L}}(z(\epsilon^{-\mu}\tau))d\tau\\
& = \epsilon^{d+\mu} \int_0^\infty \tilde{\mathcal{L}}(z(\tau'))d\tau'= \epsilon^{d+\mu} \mathcal{V}(z_0).
\end{align*}
%where we have performed the time rescaling $\tau' = e^{-\mu}\tau$.
%This proves the homogeneity of $\mathcal{V}$.
The remaining claims follow by observing that $V_\infty(x_0) = \inf_{p_0\in\mathbb{R}^n} \mathcal{V}(x_0,p_0)$ for every $x_0\in\mathbb{R}^n$ and $\delta_\epsilon^s:\mathbb{R}^n\to\mathbb{R}^n$ is bijective, so that
\begin{align*}
V_\infty(\delta^r_\epsilon(x_0))& = \inf_{p_0\in\mathbb{R}^n} \mathcal{V}(\delta^r_\epsilon(x_0),p_0) = \inf_{\tilde{p}_0\in\mathbb{R}^n} \mathcal{V}(\delta^{(r,s)}_\epsilon(x_0,\tilde{p}_0))\\
& =\epsilon^{d+\mu} \inf_{\tilde{p}_0\in\mathbb{R}^n} \mathcal{V}(x_0,\tilde{p}_0) = \epsilon^{d+\mu} V_\infty(x_0).
\end{align*}
%    \begin{enumerate}
%        \item It is a direct computation using Proposition~\ref{prop3}.
%        \item The result follows using the homogeneity of $\mathcal{L}$ with respect to $\delta^r_\epsilon$ and a time rescaling (change of variable) $\tau= \epsilon^{-\mu}t$.
%    \end{enumerate}
\end{proof}
We have the necessary tools to prove the following theorem.
\begin{thm}
    \label{thm3}
    Let  $x_0\in \mathbb{R}^n$ and consider the solution $x_\infty$ of $(OCP)_\infty$ starting at $x_0$. Then $x_{\infty}$ reaches the origin in finite time. Moreover, if $T_{x_0}$ is defined as the first time such that $x_\infty(T_{x_0}) = 0$,
 %   The optimal trajectory associated with the solution of the Hamiltonian dynamic given by (\ref{eq1})--(\ref{eq3}) minimizing $\mathcal{J}_\infty$ reaches the origin in finite time. Denote by $T_{x_0}$ the first time when the optimal trajectory solution of $(OCP)_\infty$ reaches $0$, i.e., if $T_{x_0}$ is the infimum of times $T>0$ so that 
  %  $\forall t \geq T$, $x(t)=0$, 
     there exists $C > 0$ such that
    \begin{equation}\label{eq:TCV}
    T_{x_0} \leq C\sum_{i=1}^n |(x_0)_i|^{\frac{\mu}{r_i}}
    %\frac{V^{1-\frac{d}{d+\mu}}_q(x_0)}{c(1-\frac{d}{d+\mu})}.
    \end{equation}
\end{thm}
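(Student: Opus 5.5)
The plan is to use $V_\infty$ as a Lyapunov function along the optimal trajectory and combine its homogeneity (Lemma~\ref{lem4}) with the commensurability Lemma~\ref{lem1} to get a differential inequality of the form $\dot V\le -cV^{\frac{d}{d+\mu}}$, which forces extinction in finite time.

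\textbf{Step 1: the dissipation identity.} By the dynamic programming principle, for every $t\ge0$ the tail of an optimal control is optimal from $x_\infty(t)$. Hence
\[V_\infty(x_\infty(t)) = \int_t^\infty \mathcal{L}(x_\infty(\tau),u_\infty(\tau))\,d\tau .\]
Consequently $t\mapsto V_\infty(x_\infty(t))$ is absolutely continuous and nonincreasing, with
\[\frac{d}{dt}V_\infty(x_\infty(t)) = -\mathcal{L}(x_\infty(t),u_\infty(t)) \le -F(x_\infty(t))\quad\text{a.e.}\]

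\textbf{Step 2: comparing $F$ and $V_\infty$.} First, $V_\infty$ is continuous by Proposition~\ref{p:conv2} and homogeneous of degree $d+\mu$ with respect to $(\delta^r_\epsilon)$ by Lemma~\ref{lem4}. Second, $V_\infty$ is positive definite. Indeed, $V_\infty(0)=0$ with the zero control. If $x_0\neq0$, then $F(x_\infty(t))>0$ on a neighbourhood of $t=0$ by continuity, so the integral of $\mathcal{L}$ is positive. Lemma~\ref{lem1} with $W_1=F$ and $W_2=V_\infty$ then gives $V_\infty\le c_2F^{\frac{d+\mu}{d}}$, that is, $F\ge c\,V_\infty^{\frac{d}{d+\mu}}$. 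Setting $v(t)=V_\infty(x_\infty(t))$ we obtain
\[\dot v\le -c\, v^{\theta},\qquad \theta=\tfrac{d}{d+\mu}\in(0,1).\]
The standard comparison argument integrates this inequality: $\frac{d}{dt}v^{1-\theta}\le -c(1-\theta)$ as long as $v>0$. Therefore $v$ vanishes at some time $T\le \frac{v(0)^{1-\theta}}{c(1-\theta)}=\frac{V_\infty(x_0)^{\mu/(d+\mu)}}{c(1-\theta)}$. At that time $V_\infty(x_\infty(T))=0$, so by positive definiteness $x_\infty(T)=0$, and the first-hitting time satisfies $T_{x_0}\le T$.

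\textbf{Step 3: the bound~\eqref{eq:TCV}.} The function $V_\infty^{\mu/(d+\mu)}$ is continuous, positive definite and homogeneous of degree $\mu$. The function $\sum_i|x_i|^{\mu/r_i}$ is also continuous, positive definite and homogeneous of degree $\mu$. Applying Lemma~\ref{lem1} once more gives $V_\infty(x_0)^{\mu/(d+\mu)}\le C'\sum_i|(x_0)_i|^{\mu/r_i}$, which yields~\eqref{eq:TCV}.

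\textbf{Main obstacle.} I expect the delicate part to be Step 1: the justification that the tail of the optimal control is optimal, and the absolute continuity of $v$. This is a standard concatenation argument: if the tail were not optimal, gluing a better control onto the initial segment would contradict the optimality of $u_\infty$. Uniqueness from Theorem~\ref{thm1} is not even needed for this. The absolute continuity then follows because $\mathcal{L}(x_\infty,u_\infty)\in L^1$.

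A secondary point is that Lemma~\ref{lem1} requires the dilation weights of $F$ to be exactly the $r$ of~\eqref{eq12}. This is the setting in which Lemma~\ref{lem4} applies, so I will assume it.
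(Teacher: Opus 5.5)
Your proof is correct and follows essentially the same route as the paper. Both arguments use the dynamic-programming identity $V_\infty(x_\infty(t))=\int_t^\infty \mathcal{L}\,d\tau$, the bound $F\ge c\,V_\infty^{d/(d+\mu)}$ from Lemma~\ref{lem1} combined with the homogeneity of Lemma~\ref{lem4}, integration of the resulting inequality for $V_\infty^{\mu/(d+\mu)}$, and a final homogeneity comparison giving~\eqref{eq:TCV}. The differences are only refinements: you differentiate $\int_t^\infty \mathcal{L}(x_\infty,u_\infty)\,d\tau$ almost everywhere via absolute continuity rather than through the continuous extremal lift $\tilde{\mathcal{L}}(z)$, and you make explicit the positive definiteness of $V_\infty$ and the exclusion of $T_{x_0}=\infty$, which the paper uses tacitly.
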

\begin{proof}
Consider an extension of the map $\mathcal{V}$ in Lemma~\ref{lem4} defined as 
\[\mathcal{W}(z_0,t) =\int_t^\infty \tilde{\mathcal{L}}(z(\tau))d\tau,\]
where $z$ is an extremal pair starting from $z_0$. From now on we assume that $z$ is the extremal lift associated with the optimal trajectory $x_\infty$. Since, for every $\tau\geq 0$, the curve  $x_\infty(\tau+\cdot)$ is the solution of $(OCP)_\infty$ starting from $x_\infty(\tau)$, we have that 
\begin{equation}
\label{eq-V-W}
V_\infty(x_\infty(s+t)) = \mathcal{W}(z_0,s+t) = \mathcal{W}(z(s),t)
\end{equation} 
for every $s,t\geq 0$. The function $\mathcal{W}$ is differentiable with respect to the second variable, and we obtain from~\eqref{eq-V-W}
\begin{align*}
\dot{V}_{\infty}(x_{\infty}(s))& = %\frac{d\mathcal{W}}{dt}(z_0,s) = 
\frac{d\mathcal{W}}{dt}(z(s),0)\\
& = - \tilde{\mathcal{L}}(z(s))\\
& \leq -F(x_\infty(s)).
\end{align*}
From the homogeneity of $V_\infty$ given by Lemma~\ref{lem4}, as well as the positive-definiteness and homogeneity of $F$, we obtain from Lemma~\ref{lem1} that
\[\dot{V}_{\infty}(x_{\infty}(s)) \leq -c\, V_\infty^{\frac{d}{d+\mu}}(x_\infty(s))\]
for some $c>0$ or equivalently, as long as $V_{\infty}(x_{\infty}(s))>0$,
\begin{equation}
\label{eq:finite-dec}
\frac{d}{ds}\left(V_{\infty}^\frac{\mu}{d+\mu}(x_{\infty}(s))\right) \leq -c\frac{\mu}{d+\mu}.
\end{equation}

%Since $\frac{d}{d+\mu} \in (0,1)$, we have finite-time convergence. 
Since $V_{\infty}(x_{\infty}(s))>0$ for $s\in [0,T_{x_0})$, by integrating  both sides of~\eqref{eq:finite-dec} on $[0,T_{x_0}]$ we get
\[0 = V_{\infty}^\frac{\mu}{d+\mu}(x_{\infty}(T_{x_0}))\leq V_{\infty}^\frac{\mu}{d+\mu}(x_0)-c\frac{\mu}{d+\mu}T_{x_0},\]
so that $T_{x_0}\leq \frac{d+\mu}{c\mu}\,V_{\infty}^\frac{\mu}{d+\mu}(x_0)$. 

Finally, the estimate~\eqref{eq:TCV} follows from Corollary~\ref{cor1} and the fact that $V_{\infty}^\frac{\mu}{d+\mu}$ is homogeneous of degree $\mu$ with respect to the family $(\delta_\epsilon^r)_{\epsilon>0}$.  
\end{proof}
\noindent
The following result asserts in particular that solutions of $(OCP)_\infty$ are also solutions of $(OCP)_T$ for $T$ large enough.

%Immediately, one deduces the following corollary [*** of Proposition~\ref{prop5} ***]
\begin{prop}
\label{cor3}
Let $x_\infty$ and $T_{x_0}$ as in Theorem~\ref{thm3} and denote as $(x_\infty,p_\infty)$ the associated extremal lift.
Then, $(x_\infty(t),p_\infty(t))=(0,0)$ for every $t\geq T_{x_0}$. Moreover, for every $T\geq T_{x_0}$ the restriction of $(x_\infty,p_\infty)$ to $[0, T]$ corresponds to the unique extremal lift associated with the solution of $(OCP)_{T}$.
%Let $(x^*,p^*)$ be the optimal pair solution of $(OCP)_\infty$ and $T_{x_0}$ be the first time when the optimal trajectory reaches $0$, then $(x^*,p^*)$ is the unique solution of $(OCP)_{T_{x_0}}$.
\end{prop}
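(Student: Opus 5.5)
First I will show that $x_\infty(t)=0$ for all $t\ge T_{x_0}$. Once $x_\infty(T_{x_0})=0$, the optimal cost from there is $V_\infty(0)=0$, and it is attained by the zero control. By the dynamic programming identity~\eqref{eq-V-W}, the tail $x_\infty(T_{x_0}+\cdot)$ is optimal for $(OCP)_\infty$ from $0$. Uniqueness of the minimizer (Theorem~\ref{thm1}) then gives $u_\infty\equiv 0$ and $x_\infty\equiv 0$ on $[T_{x_0},\infty)$.

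Next I will show $p_\infty\equiv 0$ there. I will not use an ODE uniqueness argument, since~\eqref{eq2}--\eqref{eq3} need not be locally Lipschitz. Instead I will use the uniqueness of the extremal lift in Theorem~\ref{thm2}. The tail $(x_\infty,p_\infty)(T_{x_0}+\cdot)$ is an extremal lift of the optimal trajectory from $0$, because it solves~\eqref{eq2}--\eqref{eq3} and the tail of an optimal trajectory is optimal. The pair $(0,0)$ is also an extremal lift of that trajectory: $\nabla F(0)=0$ since $F\ge 0$ is $C^1$ with a minimum at $0$, and $\lfloor 0\rceil^{1/(q-1)}=0$. Uniqueness gives $p_\infty(t)=0$ for $t\ge T_{x_0}$.

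For the second claim, fix $T\ge T_{x_0}$ and restrict $(x_\infty,p_\infty)$ to $[0,T]$. It solves~\eqref{eq2}--\eqref{eq3} with $x(0)=x_0$ and $p(T)=0$, so by Proposition~\ref{prop5} it is the unique solution of~\eqref{eq2}--\eqref{eq4}. The extremal lift of the minimizer of $(OCP)_T$ also solves~\eqref{eq2}--\eqref{eq4}, by Theorem~\ref{thm1} and the Maximum Principle. The two therefore coincide. One can also check directly that the restriction is optimal. For any admissible $u$ on $[0,T]$, extend it by the $(OCP)_\infty$-optimal control from $x(T)$. This gives
\[
\mathcal{J}_T(x_0,u)+V_\infty(x(T))\ge V_\infty(x_0)=\mathcal{J}_T(x_0,u_\infty),
\]
where the equality holds because the cost of $x_\infty$ vanishes after $T_{x_0}$. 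Since $V_\infty\ge 0$, it follows that $\mathcal{J}_T(x_0,u)\ge\mathcal{J}_T(x_0,u_\infty)$.

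The main obstacle is the vanishing of $p_\infty$ after $T_{x_0}$. This depends on using Theorem~\ref{thm2}'s uniqueness correctly, which requires knowing that tails of optimal trajectories are optimal, rather than on Cauchy--Lipschitz uniqueness, which fails near $p_n=0$ when $q>2$.
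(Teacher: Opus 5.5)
Your main line is correct. Your treatment of $x_\infty$ on $[T_{x_0},\infty)$ matches the paper. So does your final step: the restriction solves \eqref{eq2}--\eqref{eq4} with $p_\infty(T)=0$, hence by Proposition~\ref{prop5} it coincides with the Maximum Principle lift of the $(OCP)_T$ minimizer from Theorem~\ref{thm1}. There you usefully spell out the existence step the paper leaves implicit.

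You differ from the paper in how you show $p_\infty\equiv 0$ after $T_{x_0}$. You invoke the uniqueness part of Theorem~\ref{thm2} for the zero trajectory. The paper argues directly:
\begin{itemize}
\item $u_\infty\equiv 0$ forces $(p_\infty)_n\equiv 0$ by \eqref{eq1};
\item \eqref{eq3} reduces to $\dot p=-J_n^\top p$ because $\nabla F(0)=0$;
\item observability of $(-J_n^\top,e_n^\top)$ then gives $p_\infty\equiv 0$.
\end{itemize}
The paper's route is elementary and self-contained. Yours rests on a theorem whose proof is deferred, and gains nothing in exchange. The uniqueness you invoke is itself just this observability argument applied to the difference of two lifts of the same $x$: by \eqref{eq2} they share the $n$-th component, and by \eqref{eq3} their difference solves $\dot w=-J_n^\top w$.

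Your concern about Cauchy--Lipschitz is therefore misplaced. Once $x$ is known, \eqref{eq2} determines $p_n$ and \eqref{eq3} is linear in $p$, so no local uniqueness for the nonlinear Hamiltonian system is ever needed. The issue that \emph{tails are optimal} is not a real obstacle either: the tail is identically zero, hence trivially the unique optimal trajectory from $0$.

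Your supplementary \emph{direct check} of optimality, however, is wrong and should be dropped. From $\mathcal{J}_T(x_0,u)+V_\infty(x(T))\ge \mathcal{J}_T(x_0,u_\infty)$ and $V_\infty\ge 0$ you cannot conclude $\mathcal{J}_T(x_0,u)\ge \mathcal{J}_T(x_0,u_\infty)$. The sign goes the wrong way, and the inequality only covers controls with $x(T)=0$. Since $(OCP)_T$ has a free endpoint, the real content is the transversality condition $p_\infty(T)=0$.

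A correct direct check uses convexity of $\mathcal{L}$ together with the identity $\frac{d}{dt}\big(p_\infty^\top(x-x_\infty)\big)=\nabla F(x_\infty)^\top(x-x_\infty)+(p_\infty)_n(u-u_\infty)$. Its integral over $[0,T]$ vanishes because $x(0)=x_\infty(0)$ and $p_\infty(T)=0$. Your argument via Proposition~\ref{prop5} does not depend on this remark, so the proof stands without it.
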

\begin{proof}
The optimal trajectory $x_\infty$ and the associated optimal control are identically equal to zero for $t\geq T_{x_0}$. Then $\dot{p}_{\infty}(t) = -J_n^\top p_{\infty}(t)$ and $e_n^\top p_{\infty}(t) = 0$ for every $t\geq T_{x_0}$. Since the pair $(-J_n^\top,e_n^\top)$ is observable, we deduce that $p_{\infty}(t) = 0$ for every $t\geq T_{x_0}$. Then, the last part of the corollary follows from  Proposition~\ref{prop5}.
\end{proof}
\subsection{Implicit feedback expression}
%Within the same framework, 
We next study the regularity of the value function associated with 
$(OCP)_\infty$. %This will allow us to express the optimal control in feedback form.
The proof is postponed to a forthcoming publication.
\begin{prop}
    \label{prop14}
    The value function $V_\infty$ is continuously differentiable on $\mathbb{R}^n$ and $\nabla V_\infty(x_0)=-p_\infty(0)$, where $(x_\infty,p_\infty)$ is the extremal lift associated with the optimal trajectory $x_\infty$ starting at $x_0$.
\end{prop}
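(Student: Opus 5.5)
We prove that $V_\infty$ is differentiable at every $x_0$ with $\nabla V_\infty(x_0)=-p_\infty(0)$. The plan is to show that $-p_\infty(0)$ is a subgradient of the convex function $V_\infty$ (Proposition~\ref{p:conv2}), that it is also a one-sided supergradient, and then to get continuity of $x_0\mapsto p_\infty(0)$ from uniqueness and compactness.

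\textbf{Reduction to a finite horizon.} Fix $x_0$ and a bounded neighborhood $U$ of $x_0$. By Theorem~\ref{thm3} and the homogeneity estimate~\eqref{eq:TCV}, there is a uniform $T$ with $T\geq T_{y}$ for all $y\in U$. Proposition~\ref{cor3} then gives $V_\infty(y)=V_T(y)$ on $U$. Moreover, $(x_\infty,p_\infty)$ restricted to $[0,T]$ is the unique extremal lift of $(OCP)_T$, with $p_\infty(T)=0$ and $x_\infty(T)=0$. So it suffices to work with the finite horizon problem on $[0,T]$.

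\textbf{Supergradient inequality.} For $y\in U$, take the admissible control $u=u_\infty$ from $y$. The trajectory is $x_\infty+e^{tJ_n}(y-x_0)$, and this control is not optimal from $y$, so
\[V_T(y)-V_T(x_0)\leq \int_0^T \big(F(x_\infty+e^{tJ_n}h)-F(x_\infty)\big)dt, \qquad h=y-x_0.\]
By the $C^1$ property (A1) and uniform continuity of $\nabla F$ on compacts, the right-hand side equals
\[\int_0^T \nabla F(x_\infty)^\top e^{tJ_n}h\,dt+o(|h|).\]
From~\eqref{eq3} we have $\frac{d}{dt}\big(p_\infty^\top e^{tJ_n}h\big)=\nabla F(x_\infty)^\top e^{tJ_n}h$. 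Together with $p_\infty(T)=0$, this shows the integral equals $-p_\infty(0)^\top h$. Hence
\[V_T(x_0+h)\leq V_T(x_0)-p_\infty(0)^\top h+o(|h|).\]

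\textbf{Conclusion of differentiability.} A convex function satisfying this upper expansion at a point is differentiable there. Indeed, for any subgradient $g$ we get $g^\top h\leq -p_\infty(0)^\top h+o(|h|)$ for all $h$, which forces $g=-p_\infty(0)$. The subdifferential is nonempty, so it is the singleton $\{-p_\infty(0)\}$, and this gives $\nabla V_\infty(x_0)=-p_\infty(0)$.

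\textbf{Continuity of the gradient.} For a convex function, differentiability everywhere already implies $C^1$, since gradients of convex differentiable functions are continuous. This yields $V_\infty\in C^1(\mathbb{R}^n)$ directly. As an alternative, one could show that $x_0\mapsto p_\infty(0)$ is continuous via compactness of extremals and the uniqueness in Proposition~\ref{prop5}.

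\textbf{Main obstacle.} The delicate step is the uniform horizon $T$ on a neighborhood, which is needed to replace $V_\infty$ by $V_T$. It rests on~\eqref{eq:TCV} being locally bounded, and this holds since the right-hand side is continuous in $x_0$. Once that is in place, the rest is standard convex analysis.
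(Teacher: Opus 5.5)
The paper does not prove Proposition~\ref{prop14}; it defers the proof to a forthcoming publication, so there is no in-text argument to compare yours with. Judged on its own, your proof is correct and complete relative to the earlier stated results (Theorems~\ref{thm2} and~\ref{thm3}, Propositions~\ref{p:conv2} and~\ref{cor3}), and none of these relies on Proposition~\ref{prop14}. The individual steps all check out:
the adjoint identity $\frac{d}{dt}\big(p_\infty^\top e^{tJ_n}h\big)=\nabla F(x_\infty)^\top e^{tJ_n}h$ is right;
the boundary term vanishes because $p_\infty(T)=0$;
the Taylor remainder is uniformly $o(|h|)$ since $t$ ranges over the compact interval $[0,T]$;
the convex-analysis steps (nonempty subdifferential, singleton subdifferential implies differentiability, convex and differentiable on $\mathbb{R}^n$ implies $C^1$) are standard.

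What your route buys is that convexity of $V_\infty$ does double duty. It supplies the lower bound that matches your one-sided expansion, and it gives continuity of $\nabla V_\infty$ for free. An argument that does not use convexity (e.g.\ via semiconcavity and reachable gradients) would instead have to prove continuity of $x_0\mapsto p_\infty(0)$ directly. That means passing to limits in a Hamiltonian system that is not locally Lipschitz and invoking the uniqueness in Theorem~\ref{thm2}; you are right that this is unnecessary here. Finite-time convergence is what lets you replace the infinite horizon by a finite one with $p_\infty(T)=0$, so no transversality condition at infinity is needed.

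Three minor remarks follow.

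First, local boundedness of $y\mapsto T_y$ requires the constant in~\eqref{eq:TCV} to be independent of the initial point. The statement of Theorem~\ref{thm3} is ambiguous on this. Its proof, however, gives $T_y\le \frac{d+\mu}{c\mu}V_\infty^{\frac{\mu}{d+\mu}}(y)$ with $c$ uniform, and $V_\infty$ is continuous, which is exactly what you need.

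Second, you can avoid the uniform horizon altogether. Take $T=T_{x_0}$ and concatenate $u_\infty|_{[0,T]}$ with an optimal control from $e^{TJ_n}h$; this gives $V_\infty(x_0+h)\le \mathcal{J}_T(x_0+h,u_\infty)+V_\infty(e^{TJ_n}h)$. By Proposition~\ref{prop1} the last term is $O(|h|^{\alpha_-})=o(|h|)$, because $\alpha_-=\min\{q,\frac{d}{r_1},\dots,\frac{d}{r_n}\}>1$ (as $q>1$ and $d>\max_j r_j$). This version uses only $x_\infty(T_{x_0})=0$ and $p_\infty(T_{x_0})=0$ at the single point $x_0$, not the second part of Proposition~\ref{cor3}.

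Third, ``this control is not optimal from $y$'' should read ``need not be optimal''; the inequality $V_T(y)\le\mathcal{J}_T(y,u_\infty)$ holds regardless.
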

As a consequence of the previous result and thanks to~\eqref{eq1}, we have the following result.

\begin{cor}
\label{cor-feedback}
The optimal control $u_\infty$ for $(OCP)_\infty$  can be expressed in feedback form as 
\[u_\infty(x) = - \left\lfloor\frac{\partial V_\infty}{\partial x_n} (x)\right\rceil^{\frac{1}{q-1}}.\]
\end{cor}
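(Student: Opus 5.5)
The feedback formula follows by combining the pointwise characterization~\eqref{eq1} of the optimal control with the gradient formula of Proposition~\ref{prop14}. The one extra ingredient is the dynamic programming principle already used in the proof of Theorem~\ref{thm3}: tails of optimal trajectories are optimal. This lets us evaluate the gradient at the current state rather than only at the initial one.

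Fix $x_0\in\mathbb{R}^n$ and let $(x_\infty,p_\infty)$ be the extremal lift of the optimal trajectory, given by Theorem~\ref{thm2}. For $t\ge0$, the curve $x_\infty(t+\cdot)$ is the optimal trajectory of $(OCP)_\infty$ starting at $x_\infty(t)$; this is exactly the fact used to derive~\eqref{eq-V-W}. Its control is $u_\infty(t+\cdot)$. The pair $(x_\infty(t+\cdot),p_\infty(t+\cdot))$ solves~\eqref{eq2}--\eqref{eq3}, and together with~\eqref{eq1} it is therefore an extremal lift of this shifted optimal trajectory. By the uniqueness part of Theorem~\ref{thm2}, it is \emph{the} extremal lift.

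Applying Proposition~\ref{prop14} with initial point $x_\infty(t)$ then gives
\[
\nabla V_\infty(x_\infty(t))=-p_\infty(t)\qquad\text{for every } t\ge 0 .
\]
Substituting into~\eqref{eq1} yields
\[
u_\infty(t)=\lfloor p_\infty(t)_n\rceil^{\frac{1}{q-1}}=\Bigl\lfloor -\tfrac{\partial V_\infty}{\partial x_n}(x_\infty(t))\Bigr\rceil^{\frac{1}{q-1}}=-\Bigl\lfloor \tfrac{\partial V_\infty}{\partial x_n}(x_\infty(t))\Bigr\rceil^{\frac{1}{q-1}},
\]
where the last step uses that $\lfloor\cdot\rceil^{r}$ is odd. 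This is the claimed feedback law evaluated along the optimal trajectory. Since $x_0$ is arbitrary, the formula holds at every state.

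The only delicate point is identifying $p_\infty(t+\cdot)$ with the extremal lift of the tail problem, which is needed to apply Proposition~\ref{prop14} at the point $x_\infty(t)$. This relies on the uniqueness in Theorem~\ref{thm2} and on the optimality of tails. Optimality of tails is the standard argument: if the tail from $x_\infty(t)$ could be improved, concatenating the improved control with $u_\infty|_{[0,t]}$ would lower $\mathcal{J}_\infty(x_0,\cdot)$, contradicting optimality.

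One may also remark that the feedback $x\mapsto-\lfloor\partial_{x_n}V_\infty(x)\rceil^{1/(q-1)}$ is continuous, because $V_\infty\in C^1$. Hence the closed-loop system admits solutions by Peano's theorem, and the optimal trajectory is one of them. The corollary asserts only the representation of $u_\infty$, so uniqueness of closed-loop solutions is not required.
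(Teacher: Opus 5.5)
Your proof is correct and follows the paper's route, which simply combines the gradient formula $\nabla V_\infty(x_0)=-p_\infty(0)$ of Proposition~\ref{prop14} with the control characterization~\eqref{eq1}. Your extra step uses tail optimality together with the uniqueness of the extremal lift in Theorem~\ref{thm2} to obtain $\nabla V_\infty(x_\infty(t))=-p_\infty(t)$ for all $t\ge 0$; this makes explicit the dynamic programming argument that the paper leaves implicit when it calls this a feedback representation.
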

Note that, as a consequence of Proposition~\ref{prop14}, the feedback expression of the optimal control determined in Corollary~\ref{cor-feedback} is continuous.
%%%%%%%%%%
%%%%%%%%%%

\subsection{Hamilton-Jacobi-Bellman equation}
\label{sec-hjb}
In this section, we provide the Hamilton-Jacobi-Bellman equation associated with $(OCP)_\infty$, and we prove that this partial differential equation admits a unique solution in the class of positive definite $C^1$ functions.

Since the value function $V_\infty$ associated with $(OCP)_\infty$ is of class $C^1$, we can apply \cite[Proposition 8.1.8]{sontag2013mathematical} as well as \cite[Proposition 3.2.1]{bertsekas2012dynamic} at once to obtain the following standard theorem.
\begin{thm}
\label{prop11}
The value function $V_\infty$ of $(OCP)_\infty$ satisfies, for every $x\in\mathbb{R}^n$, the Hamilton-Jacobi-Bellman equation
\begin{align}
\nabla V_\infty(x)^\top J_n x -\frac{q-1}{q}\left|\frac{\partial V_\infty}{\partial x_n}(x)\right|^\frac{q}{q-1} + F(x) = 0. \label{eq:HJB}
\end{align}
\end{thm}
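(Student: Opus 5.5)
We derive \eqref{eq:HJB} from the dynamic programming principle along optimal trajectories, combined with the $C^1$ regularity of Proposition~\ref{prop14} and the extremal-lift description of Theorem~\ref{thm2}. Fix $x_0\in\mathbb{R}^n$ and let $(x_\infty,p_\infty)$ be the extremal lift of the optimal trajectory from $x_0$, with optimal control $u_\infty(t)=\lfloor (p_\infty)_n(t)\rceil^{\frac{1}{q-1}}$. We prove two opposite inequalities for the quantity
\[
\Theta(x):=\nabla V_\infty(x)^\top J_n x -\frac{q-1}{q}\left|\partial_{x_n} V_\infty(x)\right|^{\frac{q}{q-1}} + F(x),
\]
which we will identify as $\min_{u\in\mathbb{R}}\{\nabla V_\infty(x)^\top(J_nx+ue_n)+\frac{|u|^q}{q}+F(x)\}$.

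\textbf{Step 1: the minimisation in $u$.} For fixed $a=\partial_{x_n}V_\infty(x)$, the function $u\mapsto au+\frac{|u|^q}{q}$ is strictly convex. It is minimised at $u=-\lfloor a\rceil^{\frac{1}{q-1}}$, and its minimal value is $-\frac{q-1}{q}|a|^{\frac{q}{q-1}}$. Hence $\Theta(x)=\min_u\{\dots\}$, and the minimiser agrees with the feedback in Corollary~\ref{cor-feedback}.

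\textbf{Step 2: $\Theta\ge 0$.} Fix any constant $u\in\mathbb{R}$ and let $y$ be the trajectory of \eqref{eq:L} on $[0,h]$ with control $u$. Concatenate this control with the optimal control from $y(h)$. Suboptimality of the concatenation gives
\[
V_\infty(x_0)\le \int_0^h \mathcal{L}(y,u)\,dt + V_\infty(y(h)).
\]
This uses the fact that the optimum from $y(h)$ is finite, which follows from Proposition~\ref{prop1}. Since $V_\infty\in C^1$ and $y$ is $C^1$, we can subtract $V_\infty(x_0)$, divide by $h$ and let $h\to0^+$. This yields $0\le \mathcal{L}(x_0,u)+\nabla V_\infty(x_0)^\top(J_nx_0+ue_n)$ for every $u$, and therefore $\Theta(x_0)\ge0$.

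\textbf{Step 3: $\Theta\le 0$.} Along the optimal trajectory we already have from the proof of Theorem~\ref{thm3} that
\[
\frac{d}{ds}V_\infty(x_\infty(s))=-\mathcal{L}(x_\infty(s),u_\infty(s)).
\]
By the chain rule, this derivative equals $\nabla V_\infty(x_\infty(s))^\top(J_nx_\infty(s)+u_\infty(s)e_n)$. The delicate point is evaluating at $s=0$. The control $u_\infty$ is continuous by Theorem~\ref{thm2}, so the identity holds for every $s$ and not just almost everywhere. At $s=0$ it gives equality in the minimisation for the particular value $u=u_\infty(0)$, so $\Theta(x_0)\le0$. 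Combining with Step 2 gives $\Theta(x_0)=0$, i.e.\ \eqref{eq:HJB}.

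As a consistency check, Step 3 together with Step 1 recovers $\nabla V_\infty(x_0)=-p_\infty(0)$ at the $n$-th coordinate, which matches Proposition~\ref{prop14}. The main obstacle is Step 3. It requires the exact derivative identity along the optimal trajectory at the initial time, which relies on the continuity of $u_\infty$ (hence of $p_\infty$) and on the dynamic programming identity \eqref{eq-V-W}. If one preferred to avoid continuity, one could instead average the identity over $[0,h]$ and pass to the limit using continuity of $\nabla V_\infty$ and of $x_\infty$. One would then still need $\frac1h\int_0^h u_\infty\to u_\infty(0)$ in a suitable sense, which again comes from Theorem~\ref{thm2}.
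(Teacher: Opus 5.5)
Your argument is correct and is essentially the standard dynamic-programming derivation of the HJB equation from the $C^1$ regularity of $V_\infty$ (Proposition~\ref{prop14}), which the paper does not write out but obtains by citing the corresponding textbook results of Sontag and Bertsekas; you simply carry it out directly on the infinite horizon. A minor simplification: Step~3 does not need continuity of $u_\infty$, since the identity $\nabla V_\infty(x_\infty(s))^\top(J_nx_\infty(s)+u_\infty(s)e_n)+\mathcal{L}(x_\infty(s),u_\infty(s))=0$ holds for a.e.\ $s$, giving $\Theta(x_\infty(s))\le 0$ a.e., and continuity of $\Theta$ lets you pass to the limit $s\to0^+$.
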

%
%    \label{thm4}
%    Let us denote by [* NECESSARY? *]
%     \[\mathcal{A} \coloneqq \lbrace w \in L^{q}(\mathbb{R}_{\ge 0},\mathbb{R})|\quad \mathcal{J}_\infty(x_0,w) < \infty \rbrace\] for any $\tau \geq 0$, one has the following equality
%    \begin{align*}
%        &V_\infty(x_0)\\ 
%        &= \inf_{w \in \mathcal{A}} \left\{\int_0^\tau \mathcal{L}(\phi(w,x_0,s),w(s))ds + V_\infty(\phi(w,x_0,\tau))\right\}.
%    \end{align*}
%\end{thm}
%\begin{sketch}
%        Introducing \[\tilde{V} = \inf_{w \in \mathcal{A}} \int_0^\tau \mathcal{L}(\phi(w,x_0,s),w(s))ds + V_\infty(\phi(w,x_0,\tau)),\]
%        one obtains immediately $\forall x \in \mathbb{R}^n, \quad V_\infty(x) \geq \tilde{V}(x)$ since the optimal control is admissible. The converse inequality is proved by exploiting the definition of infimum.
%    \end{sketch}
%\noindent
%We will use the above theorem to prove the next proposition.
%\begin{prop}
%\label{prop11}
%The Hamilton-Jacobi-Bellman equation is written in the following way :
%\begin{align}
%    \langle \nabla V_\infty(x_0),J_nx_0\rangle -\frac{q-1}{q}\left|\frac{\partial V_\infty}{\partial x_n}(x_0)\right|^\frac{q}{q-1} + F(x_0) = 0. \label{eq:HJB}
%\end{align}
%\end{prop}
%\begin{sketch}
%   The equation follows after the direct differentiation of the equality from Theorem~\ref{thm4} and passing to the limit in $\tau$.
%\end{sketch}
%Furthermore, a solution of the Hamilton-Jacobi-Bellman equation must be unique.
The next result will be used to deduce the uniqueness of the solution of~\eqref{eq:HJB}. The proof is postponed to a forthcoming paper.
\begin{prop}
    \label{prop13}
    Let $W$ be a positive definite function in $\mathcal{C}^1(\mathbb{R}^n,\mathbb{R})$,
    \begin{enumerate}
    \setlength{\itemsep}{5pt}
        \item if $W$ is a sub-solution of (\ref{eq:HJB}), i.e., if it satisfies for every $x \in \mathbb{R}^n$ the inequality
        \[\nabla W^\top(x)J_n x -\frac{q-1}{q}\left|\frac{\partial W}{\partial x_n}(x)\right|^\frac{q}{q-1} \leq -F(x)  \]
        \noindent
        then  $V_\infty(x) \leq W(x)$  for every $x \in \mathbb{R}^n$,
        \item if $W$ is a super-solution of (\ref{eq:HJB}), i.e., if it satisfies for every $x \in \mathbb{R}^n$ the inequality
        \[\nabla W^\top (x)J_n x -\frac{q-1}{q}\left|\frac{\partial W}{\partial x_n}(x)\right|^\frac{q}{q-1} \geq -F(x)  \]
        then $V_\infty(x) \geq W(x)$ for every $x \in \mathbb{R}^n$.
    \end{enumerate}
\end{prop}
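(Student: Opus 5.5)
Both items will be proved by a verification argument along trajectories. The idea is to compare $W$ with the running cost, using the pointwise inequality in the statement together with a Young/Fenchel inequality for the control term.

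\emph{Item 1 (sub-solution).} For $x_0$ fixed I will build a closed-loop trajectory from the candidate feedback $u=-\lfloor \partial_{x_n}W(x)\rceil^{\frac{1}{q-1}}$. This feedback is continuous because $W\in C^1$, so Peano's theorem gives a solution $x(\cdot)$ of $\dot x=J_nx+ue_n$. Along it we have
\[
\frac{d}{dt}W(x)=\nabla W^\top J_nx-\Bigl|\frac{\partial W}{\partial x_n}\Bigr|^{\frac{q}{q-1}} .
\]
Moreover $|u|^q/q=\frac1q|\partial_{x_n}W|^{\frac{q}{q-1}}$. Hence
\[
\frac{d}{dt}W(x)+\mathcal{L}(x,u)=\nabla W^\top J_nx-\frac{q-1}{q}\Bigl|\frac{\partial W}{\partial x_n}\Bigr|^{\frac{q}{q-1}}+F(x)\le 0 .
\]
Two consequences follow. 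First, $W$ is nonincreasing along the trajectory, so the trajectory stays in the compact sublevel set $\{W\le W(x_0)\}$ and exists globally. This compactness needs $W$ to be radially unbounded; if that is not assumed, I will localize the argument and use $F>0$ to rule out escape. Second, integrating on $[0,T]$ gives $\int_0^T\mathcal{L}\le W(x_0)-W(x(T))\le W(x_0)$, because $W\ge0$. Letting $T\to\infty$ yields $\mathcal{J}_\infty(x_0,u)\le W(x_0)$. It follows that $u\in L^q$, and hence $V_\infty(x_0)\le W(x_0)$.

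\emph{Item 2 (super-solution).} Take the optimal pair $(x_\infty,u_\infty)$, which exists by Theorem~\ref{thm1}. For any real $u$, Young's inequality gives $-\partial_{x_n}W\,u\le \frac{|u|^q}{q}+\frac{q-1}{q}|\partial_{x_n}W|^{\frac{q}{q-1}}$. Along $x_\infty$ this yields
\[
-\frac{d}{dt}W(x_\infty)=-\nabla W^\top J_nx_\infty-\partial_{x_n}W\,u_\infty\le \frac{|u_\infty|^q}{q}+F(x_\infty) ,
\]
where the last step uses the super-solution inequality. Integrating on $[0,T]$ gives $W(x_0)-W(x_\infty(T))\le \mathcal{J}_T(x_0,u_\infty)\le V_\infty(x_0)$. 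By Theorem~\ref{thm3}, $x_\infty(T)=0$ for $T\ge T_{x_0}$, and $W(0)=0$ by positive definiteness. Therefore $W(x_0)\le V_\infty(x_0)$. The chain rule applies here because $W\in C^1$ and $x_\infty$ is absolutely continuous.

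\emph{Main obstacle.} Item 2 is clean, thanks to the finite-time convergence already established in Theorem~\ref{thm3}. The delicate point is Item 1: I must guarantee that the closed-loop solution is global and that the feedback control is admissible. Admissibility follows from the integrated bound once global existence holds. Global existence is where radial unboundedness, or a properness argument, of $W$ is needed. I would first try to deduce properness from $W\ge V_\infty$ locally, together with the homogeneity of $V_\infty$ from Lemma~\ref{lem4}. Failing that, I would add it as an explicit hypothesis.
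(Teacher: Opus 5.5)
Since the paper defers the proof of this proposition to a forthcoming publication, your argument is judged on its own. Your Item~2 is correct and complete. The Young inequality, the use of the super-solution inequality, the bound $\mathcal{J}_T(x_0,u_\infty)\le V_\infty(x_0)$ and the conclusion at $T=T_{x_0}$ all check out. Theorem~\ref{thm3} is not even needed there. Since $\int_0^\infty F(x_\infty(t))\,dt\le V_\infty(x_0)<\infty$ and $F\ge c_1\sum_i|x_i|^{\alpha_i}$ by Corollary~\ref{cor1}, there is a sequence $t_k\to\infty$ with $x_\infty(t_k)\to 0$, and continuity of $W$ at $0$ is enough.

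The gap is in Item~1, at exactly the step you flag, and the remedies you propose do not close it. Radial unboundedness of $W$ is not a hypothesis, and adding it would weaken the statement. Deducing properness from $W\ge V_\infty$ is circular, because that inequality is what you are proving. It does show a posteriori that every sub-solution is proper, since $V_\infty$ is proper by homogeneity. The missing observation is that no properness is needed: the dissipation inequality itself excludes finite escape.

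Let $[0,T^*)$ be the maximal interval of a Peano solution of the closed loop. For every $t<T^*$ your computation gives $\int_0^t\mathcal{L}\,ds\le W(x_0)-W(x(t))\le W(x_0)$. Hence $\int_0^{t}|u|^q\,ds\le qW(x_0)$, uniformly in $t$. If $T^*<\infty$, H\"older's inequality and the variation of constants formula for $\dot x=J_nx+ue_n$ give
\[
\|x(t)\|\le e^{\|J_n\|T^*}\Bigl(\|x_0\|+(T^*)^{\frac{q-1}{q}}\bigl(qW(x_0)\bigr)^{\frac{1}{q}}\Bigr),\qquad t\in[0,T^*).
\]
So $x$ stays bounded, which contradicts maximality: a bounded solution of an ODE with continuous right-hand side on a finite interval can be extended. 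Thus $T^*=\infty$, and the rest of your Item~1 goes through unchanged. The $L^q$ bound you planned to use for admissibility only after global existence is precisely what gives global existence.
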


From Proposition~\ref{prop13}, we deduce the following corollary.
\begin{cor}
    \label{cor6}
    The following statements hold true.
    \begin{enumerate}
    \setlength{\itemsep}{5pt}
        \item The function $V_\infty$ is the unique solution of (\ref{eq:HJB}) among positive definite $C^1(\mathbb{R}^n,\mathbb{R})$ functions.
        \item Let $W$ be a positive definite mapping in $\mathcal{C}^1(\mathbb{R}^n,\mathbb{R})$
    such that the function
     \[x\mapsto \frac{q-1}{q}\left|\frac{\partial W}{\partial x_n}(x)\right|^\frac{q}{q-1} - \nabla W(x)^\top J_nx\]
    satisfies \text{(A1)}--\text{(A3)}. Then the feedback control \[u_W(x)=-\left\lfloor\frac{\partial W}{\partial x_n}(x)\right\rceil^\frac{1}{q-1}\] induces finite time convergence.
       % Let $W$ be a positive definite $\mathcal{C}^1(\mathbb{R}^n,\mathbb{R})$ mapping and $F$ be a mapping satisfying the assumptions \text{(A1)}--\text{(A3)} and such that $W,F$ satisfy (\ref{eq:HJB}). Then $W$ is the value function corresponding to the infinite horizon cost 
    %\[\int_0^\infty \frac{|u(s)|^{q}}{q}+F(x(s)) ds,\]
    %\noindent
    %and the feedback control 
    %\[u(x)=-\left\lfloor\frac{\partial W}{\partial x_n}(x)\right\rceil^\frac{1}{q-1}\] 
    %nduces finite time convergence. 
    \end{enumerate}
\end{cor}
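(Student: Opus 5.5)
For item 1, the plan is to argue directly from Proposition~\ref{prop13}. By Theorem~\ref{prop11} together with Proposition~\ref{prop14}, $V_\infty$ is a $C^1$ solution of \eqref{eq:HJB}. It is positive definite: $V_\infty(0)=0$ because the zero control is admissible, and $V_\infty(x)\geq \int_0^\infty F(x_\infty)\,dt>0$ for $x\neq 0$ since $x_\infty$ is continuous and $F(x)>0$. Now let $W$ be any positive definite $C^1$ solution of \eqref{eq:HJB}. Then $W$ is simultaneously a sub-solution and a super-solution in the sense of Proposition~\ref{prop13}. Item 1 of that proposition gives $V_\infty\le W$, and item 2 gives $V_\infty\ge W$, so $W=V_\infty$.

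For item 2, I would set
\[
G(x):=\frac{q-1}{q}\left|\frac{\partial W}{\partial x_n}(x)\right|^{\frac{q}{q-1}}-\nabla W(x)^\top J_n x .
\]
By hypothesis $G$ satisfies (A1)--(A3). This means we may run the whole theory of the paper with $F$ replaced by $G$, with the same $q$ and with the homogeneity weights attached to $G$. By construction $W$ solves the HJB equation \eqref{eq:HJB} with $F$ replaced by $G$, and $W$ is positive definite and $C^1$. Item 1, applied to this new problem, then identifies $W$ with the value function $V^G_\infty$ of $(OCP)_\infty$ for the cost $\frac{|u|^q}{q}+G(x)$.

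Corollary~\ref{cor-feedback} then says that the optimal control of that problem is given in feedback form by $-\lfloor \partial_{x_n} V^G_\infty\rceil^{\frac{1}{q-1}}=u_W$. Theorem~\ref{thm3} says that the optimal trajectories reach the origin in finite time.

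The main obstacle is a subtlety in this last step. We must check that every closed-loop solution of $\dot x=J_nx+u_W(x)e_n$ is the optimal trajectory. The feedback $u_W$ is only continuous, so closed-loop solutions need not be unique. To handle this I would avoid uniqueness altogether and use a Lyapunov argument with $W$ directly. Along any closed-loop solution, the chain rule together with the identity $\partial_{x_n}W\cdot u_W=-|\partial_{x_n}W|^{\frac{q}{q-1}}$ gives
\[
\dot W=\nabla W^\top J_nx-\left|\frac{\partial W}{\partial x_n}\right|^{\frac{q}{q-1}}=-G(x)-\frac1q\left|\frac{\partial W}{\partial x_n}\right|^{\frac{q}{q-1}}\le -G(x).
\]
Since $W=V^G_\infty$, Lemma~\ref{lem4} shows that $W$ is homogeneous of degree $d+\mu$ for the weights of $G$. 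Lemma~\ref{lem1} then yields $\dot W\le -cW^{\frac{d}{d+\mu}}$, and integrating as in the proof of Theorem~\ref{thm3} gives convergence to the origin in a finite time, bounded by a multiple of $W^{\frac{\mu}{d+\mu}}(x_0)$.

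One point needs care here: the weights of $G$ must have the form \eqref{eq12} for some $\mu>0$. This is implicit in the framework of Lemma~\ref{lem4}, and I would state it as part of the standing interpretation of (A3).
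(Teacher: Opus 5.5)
Your proposal is correct and follows the route implicit in the paper, which only says the corollary is deduced from Proposition~\ref{prop13}: item 1 comes from applying Proposition~\ref{prop13} in both directions, and item 2 from identifying $W$ with the value function of the problem with $F$ replaced by $G$, followed by the homogeneity-based decay estimate of Theorem~\ref{thm3}. You depart from the paper's likely route in two useful ways: you run that estimate directly on $W$ along every closed-loop solution, rather than only along the optimal trajectory as an appeal to Corollary~\ref{cor-feedback} would, which is needed because the merely continuous feedback $u_W$ need not generate unique solutions; and your caveat that the weights of $G$ must have the form \eqref{eq12} with $\mu>0$ is necessary, as shown by $n=1$, $q=2$, $W(x)=\frac{2\sqrt{2}}{5}|x|^{5/2}$, which gives $G(x)=|x|^3$ satisfying (A1)--(A3) with $r_1=1$, $d=3$, yet $u_W(x)=-\sqrt{2}\lfloor x\rceil^{3/2}$ yields only asymptotic convergence.
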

%%%%%%%%%%
%%%%%%%%%%

\subsection{Existing feedback and optimal control}

%%%%%%%%%%
%%%%%%%%%%
%\subsection{Application to gain regulation {\color{blue}[* change title???? *]}}
As an application, we will establish a link between the results in \cite{hong2002finite} and those of the previous sections.
First, we recall the expression of the controllers in \cite{hong2002finite}. Let $r_1,k > 0$ be such that $r_1 > (2n-1)k$, and set
\[r_i =r_1-(i-1)k,\quad i=2,\dots,n.\]
Furthermore, let $ \beta_0=r_2+1,\ \beta_{n-1}=2$, and
\[\beta_j=\beta_0\frac{r_1}{r_{j+1}},\quad j=1,\dots,n-2.\]
%\begin{align}
%    &r_i =r_1-(i-1)k,\quad i=1,\dots,n\label{eq37} \\
%    & \beta_0=r_2,\quad \beta_{n-1}=1\\
%    &\forall j \in \lbrace 0,\dots, n-2\rbrace,\quad \beta_j = \frac{(1+\beta_{j-1})r_{j}-r_{j+1}}{r_{j+1}}.\label{eq38}
%\end{align} 
Introduce $V_1: y \in \mathbb{R} \mapsto |y|^{\beta_0}$ as well as $(\ell_j)_{1 \leq j \leq n}$ a family of large enough positive constants. Let  $u_0=0$. For $1 \leq j \leq n$ and $2\leq k\leq n$, we set $\bar{x}_j=(x_1,\dots,x_j)$ and we define recursively the following functions
\begin{align}
    u_j(\bar{x}_j) &= -\ell_j\left\lfloor  \lfloor x_j \rceil^{\beta_{j-1}-1} -\lfloor u_{j-1}(\bar{x}_{j-1})\rceil^{\beta_{j-1}-1} \right\rceil^\frac{r_j-k}{r_j(\beta_{j-1}-1)}, \label{eq39}\\
    W_j(\bar{x}_j)&=\frac{1}{\beta_{j-1}}\left(|x_j|^{\beta_{j-1}}+(\beta_{j-1}-1)|u_{j-1}(\bar{x}_{j-1})|^{\beta_{j-1}}\right) \nonumber \\
    &-x_j\lfloor u_{j-1}(\bar{x}_{j-1}) \rceil^{\beta_{j-1}-1}, \label{eq40}\\
     V_k(\bar{x}_k) &= W_k(\bar{x}_k) + V_{k-1}(\bar{x}_{k-1}). \label{eq41}
\end{align}
\noindent
In \cite{hong2002finite} it was shown that the feedback controller $u_n$ stabilizes the integrator chain in finite time. Furthermore, from (\ref{eq40}) and (\ref{eq41}), it is clear that
\[u_n(x) = -\ell_n\left\lfloor  \frac{\partial V_n}{\partial x_n}(x)\right\rceil^\frac{1}{q-1}\]
with $q = \frac{2r_n-k}{r_n-k}$.
Moreover, the following inequality holds
\begin{align}
    \nabla V_n(x)^\top J_n x - \ell_n\left|\frac{\partial V_n(x)}{\partial x_n }\right|^{\frac{q}{q-1}} < 0. \label{eq72}
\end{align}
Introduce the mapping $W = \left(\frac{q}{q-1}\ell_n\right)^{q-1}V_n$.
We have 
\begin{align*}
   &  \nabla W(x)^\top J_n x - \frac{q-1}{q}\left|\frac{\partial W(x)}{\partial x_n}\right|^{\frac{q}{q-1}}\\
    &= \left(\frac{q}{q-1}\ell_n\right)^{q-1} \Big(\nabla V_n(x)^\top J_n x - \ell_n\left|\frac{\partial V_n(x)}{\partial x_n} \right|^{\frac{q}{q-1}}\Big).
\end{align*}
By \eqref{eq72}
%\begin{align*}
%    \langle \nabla W(x), J_n x\rangle &= \left(\frac{q}{q-1}\ell_n\right)^{q-1}\langle \nabla V_n(x), J_n x\rangle \\
%    &< \left(\frac{q}{q-1}\ell_n\right)^{q-1}\ell_n\left|\frac{\partial V_n(x)}{\partial x_n} \right|^{\frac{q}{q-1}}\\
%    &= \frac{q-1}{q}\left|\frac{\partial W(x)}{\partial x_n}\right|^{\frac{q}{q-1}}.
%\end{align*}
 the mapping $x\mapsto \frac{q-1}{q}\left|\frac{\partial W(x)}{\partial x_n }\right|^{\frac{q}{q-1}}- \nabla W(x)^\top J_n x $ is positive definite. Furthermore, it is continuous and homogeneous of degree $d = 2r_n-k$ with respect to the family of dilation $(\delta^r_\epsilon)_{\epsilon >0}$, as shown in~\cite{hong2002finite}. By Corollary~\ref{cor1} there exist $c_1,c_2  > 0$ such that for any $x \in \mathbb{R}^n$
\begin{equation}
    F_1(x)\leq \frac{q-1}{q}\left|\frac{\partial W(x)}{\partial x_n }\right|^{\frac{q}{q-1}} \!\! - \nabla W(x)^\top J_n x \leq F_2(x) \label{eq73}
\end{equation}
\noindent
with $\alpha_i=\frac{d}{r_i}$ and $F_k(x) = c_k\sum_{i=1}^n |x_i|^{\alpha_i}$ with $k=1,2$.

For $i \in \lbrace1,2\rbrace$, denote by $V_{i,q}$ the unique solution of (\ref{eq:HJB}) with $F=F_i$. From (\ref{eq73}), it is clear that $W$ is a sub-solution of (\ref{eq:HJB}) with $F=F_1$ and also a super-solution of (\ref{eq:HJB}) with $F=F_2$ so that using Proposition~\ref{prop13}, one has
\begin{align}
    \forall x \in \mathbb{R}^n, \quad V_{1,q}(x) \leq W(x) \leq V_{2,q}(x). \label{eq74}
\end{align}
\noindent
%In particular, if $x_0 \neq 0$, one has from (\ref{eq74})
%\[\frac{q-1}{q}\left(\frac{V_{1,q}(x_0)}{V_n(x_0)}\right)^\frac{1}{q-1} \leq \ell_n \leq \frac{q-1}{q}\left(\frac{V_{2,q}(x_0)}{V_n(x_0)}\right)^\frac{1}{q-1}.\]
\section{Numerical simulations}
\label{sec-simu}
To illustrate finite-time stabilization, we consider~\eqref{eq:L} for $n=6$, choose $(d, \mu) = (3, 0.2)$ and $q=2$, with $$F: x \in \mathbb{R}^6 \mapsto |x_1|^\frac{10}{9}+|x_2|^\frac{6}{5} + |x_3|^\frac{30}{23} + |x_4|^\frac{10}{7} + |x_5|^\frac{30}{19} + |x_6|^\frac{30}{17}.$$ According to Proposition~\ref{prop3}, $F$ is weighted-homogeneous of degree $d$ with weights $r=(2.7,2.5,2.3,2.1,1.9,1.7)$. Notice that the choice of the parameters $d,\mu$ and $q$ is totally arbitrary as long as conditions from Proposition~\ref{prop3} hold and $q > 1$. The simulations are performed in \textit {Julia} using the package OptimalControl.jl, cf.~\cite{Caillau_OptimalControl_jl_a_Julia}. %,  based on the IPOPT solver\footnote{\texttt{https://github.com/JuliaSmoothOptimizers/NLPModelsIpopt.jl}}. %(see e.g. \cite[OptimalControl.jl, NLPModelsIpopt.jl]{Caillau_OptimalControl_jl_a_Julia}). 
For a sufficiently large time horizon and a given initial condition, the problem $(OCP)_T$ is transcribed into a nonlinear programming problem and solved numerically with the built-in IPOPT solver. The discretization step is $0.08$.
\begin{figure}[h]
    \centering
    \includegraphics[width=0.95\columnwidth]{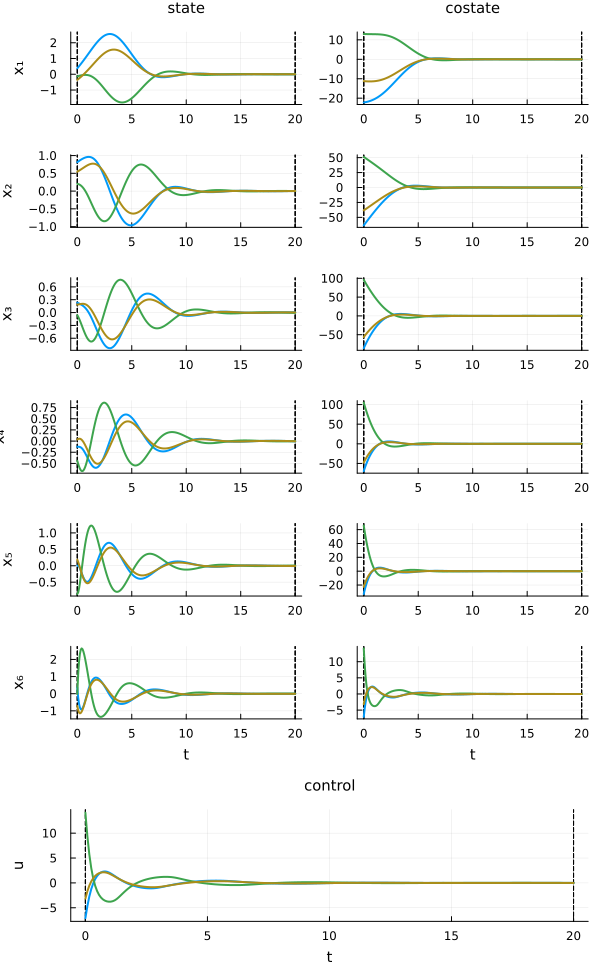}
    \caption{Optimal state, costate trajectory and control for three randomly sampled initial conditions from the unit Euclidean sphere.}
    \label{fig:Simu2}
\end{figure}

Figure~\ref{fig:Simu2} shows the behavior of the optimal state trajectory, costate trajectory, and control for some initial conditions randomly sampled in the unit sphere. The color of each curve corresponds to a specific initial condition. Note that computing several thousand solutions corresponding to initial conditions uniformly sampled on the unit sphere requires only about two minutes for an integrator chain of length six. The first execution, which may take a few seconds, is the slowest due to package loading %and problem compilation, 
whereas subsequent executions run in the millisecond range.

%\begin{figure}[h]
%    \centering
%    \includegraphics[width=\linewidth]{[-1,1].png}
%    \caption{Minimizing control and optimal pair for $(OCP)_\infty$ starting from $x_0=(-1,1)^\top$.}
%    \label{fig:Simu2}
%\end{figure}
%\begin{figure}[!t]
%    \centering
%    \includegraphics[width=\linewidth]{[100,100].png}
%    \caption{Minimizing control and optimal pair for $(OCP)_\infty$ starting from $x_0=(100,100)^\top$.}
%    \label{fig:Simu3}
%\end{figure}
%\begin{figure}
%    \centering
%    \includegraphics[width=\linewidth]{DIphaseportrait.png}
 %   \caption{Phase portrait of the state trajectory for two hundred initial conditions}
 %   \label{fig3}
%\end{figure}
\FloatBarrier
\section{Conclusion}
\label{sec-conclusion}
This paper presents a new class of finite-time stabilizing controllers for the pure integrator chain obtained as optimal controls associated with a large class of optimal control problems. These controllers admit a feedback form in terms of the value function of the corresponding 
optimal control problem. Furthermore, the value function is shown to be regular enough to satisfy, in a strong sense, the corresponding Hamilton-Jacobi-Bellman equations. Future work will address extensions to fixed-time stabilization, design of 
homogeneity-based algorithms to approximate the solution of the resulting Hamilton-Jacobi-Bellman equation, and robustness analysis under model uncertainties.
%\bibliography{ifacconf}              
%\nocite{*}

\end{document}